\numberwithin{equation}{section}
\theoremstyle{plain}
\newtheorem{teo}{Theorem}[section]
\newtheorem*{teo*}{Theorem}
\newtheorem{teoA}{Theorem}
\newtheorem*{cor*}{Corollary}
\newtheorem{corA}[teoA]{Corollary}
\newtheorem{lem}[teo]{Lemma}
\newtheorem*{lem*}{Lemma}
\newtheorem{prop}[teo]{Proposition}
\newtheorem*{prop*}{Proposition}
\theoremstyle{remark}
\newcommand{\R}{\ensuremath{{\mathbb{R}}}}
\newcommand{\esf}{\ensuremath{{\mathbb{S}}}}
\newcommand{\g}{\ensuremath{\mathtt{g}}}
\newcommand\tOmega{\ensuremath{\widetilde{\Omega}}}
\newcommand\tD{\ensuremath{\widetilde{\mathcal{D}}}}
\begin{document}

\title[Magnetic flatness and E. Hopf's theorem for magnetic systems]{Magnetic flatness and E. Hopf's theorem for magnetic systems}

\author[V. Assenza]{Valerio Assenza}
\author[J. Marshall Reber]{James Marshall Reber}
\author[I. Terek]{Ivo Terek}

\address{Instituto de Matem\'{a}tica Pura e Aplicada, Rio de Janeiro, RJ 22460-320, Brazil}
\email{valerio.assenza@impa.br}

\address{Department of Mathematics, The Ohio State University, Columbus, OH 43210, USA}
\email{marshallreber.1@osu.edu}
\email{terekcouto.1@osu.edu}

\keywords{Magnetic systems $\cdot$ Hopf's theorem $\cdot$ Green bundles}
\subjclass[2010]{53C15 (Primary), 37D40 (Secondary)}

\begin{abstract}
	Using the notion of magnetic curvature recently introduced by the first author, 
	we extend E. Hopf's theorem to the setting of magnetic systems. Namely, we prove that if the magnetic flow on the $s$-sphere bundle is without conjugate points, then the total magnetic curvature is non-positive, and vanishes if and only if the magnetic system is magnetically flat. We then prove that magnetic flatness is a rigid condition, in the sense that it only occurs when either the magnetic form is trivial and the metric is flat, or when the magnetic system is K\"ahler, the metric has constant negative sectional holomorphic curvature, and $s$ equals the Ma\~{n}\'{e} critical value.
\end{abstract}


\maketitle

\section{Introduction} \label{sec:introduction}

A classical rigidity theorem in Riemannian geometry is due to E.\ Hopf \cite{Hopf_1948}: \emph{if a closed Riemannian surface is without conjugate points, then its total Gaussian curvature is non-positive, and vanishes if and only if the metric is flat.} The Gauss-Bonnet theorem then implies that every metric without conjugate points on the two-di\-men\-si\-o\-nal torus $\mathbb{T}^2$ is flat. Hopf's theorem was generalized to higher dimensions by Green \cite{Green_1958}, with the scalar curvature replacing the Gaussian curvature. However, in absence of a higher dimensional argument \`{a} la Gauss-Bonnet, the question of whether a metric without conjugate points on a torus $\mathbb{T}^n$ is flat remained open for decades. The problem was finally solved using a different array of techniques by Burago and Ivanov \cite{Burago-Ivanov_1994}.

In this paper, we generalize Green's argument to the setting of magnetic systems without conjugate points. This generalization naturally leads us to studying flatness in a magnetic sense. 
A \emph{magnetic system} on a closed smooth manifold $M$ is a pair $(\g,\sigma)$, where $\g = \langle \cdot, \cdot \rangle$ is a Riemannian metric and $\sigma$ is a closed $2$-form on $M$. In this context, $\sigma$ is referred to as the \emph{magnetic form}, and the requirement that $\sigma$ is closed corresponds to the well-known fact that magnetic fields are divergence-free, courtesy of Maxwell's equation. The skew-adjoint endomorphism $\Omega$ of $TM$ corresponding to $\sigma$ under $\g$ is referred to as the \emph{Lorentz force}, and it is defined by
\begin{equation}\label{eqn:Lorentz_force_def}
\sigma(v,w) = \langle v,\Omega(w) \rangle \text{ for all }v, w \in TM.
\end{equation}

Recall that a magnetic system is the mathematical formalism used to describe the motion of a charged particle moving on a Riemannian manifold under the action of a magnetic force field. More precisely, a trajectory $\gamma\colon \R \to M$ of a charged particle is described by the second-order ordinary differential equation
\begin{equation}\label{eqn:Landau-Hall}
\frac{{\rm D}\dot{\gamma}}{{\rm d}t} = \Omega(\dot{\gamma}),
\end{equation}
where ${\rm D}/{\rm d}t$ denotes the covariant derivative along $\gamma$ induced by the Levi-Civita connection $\nabla$ of $\g$. Solutions of \eqref{eqn:Landau-Hall} are called \emph{$(\g,\sigma)$-geodesics}. Lifting \eqref{eqn:Landau-Hall} to $TM$, we obtain the \emph{magnetic flow} $\varPhi^{(\g,\sigma)}\colon\R\times TM\to TM$ associated with $(\g,\sigma)$. Observe that in absence of a magnetic interaction (that is, when $\sigma = 0$), \eqref{eqn:Landau-Hall} reduces to the standard equation for geodesics, and $\varPhi^{(\g,0)}$ is the geodesic flow of the metric $\g$. Since $(\g,\sigma)$-geodesics have constant speed, the infinitesimal generator of $\varPhi^{(\g,\sigma)}$ is tangent to the \emph{$s$-sphere bundles} $\Sigma_s = \{ v\in TM \mid \|v\|=s \}$ for $s > 0$. Thus, $\Sigma_s$ is an invariant set for the flow $\varPhi^{(\g,\sigma)}$, and hereafter we denote by ${\varPhi^{(\g,\sigma)}_s : \mathbb{R} \times \Sigma_s \rightarrow \Sigma_s}$ the restriction of $\varPhi^{(\g,\sigma)}$ to $\Sigma_s$. The dynamics of magnetic systems have been studied for many decades; for some historical background, see \cite{Arnold_1961, AnosovSinai_1967, Ginzburg_2004}.

Linearizing \eqref{eqn:Landau-Hall} along a $(\g,\sigma)$-geodesic $\gamma$, we obtain the differential equation
\begin{equation}\label{eqn:magnetic_Jacobi_equation}
  \frac{{\rm D}^2J}{{\rm d}t^2} + R(J,\dot{\gamma})\dot{\gamma} = (\nabla_J\Omega)(\dot{\gamma}) + \Omega\left(\frac{{\rm D}J}{{\rm d}t}\right)
\end{equation}for vector fields $J$ along $\gamma$. A solution $J$ of \eqref{eqn:magnetic_Jacobi_equation} is called a \emph{\hbox{$(\g,\sigma)$-Jacobi} field}. In addition, a $(\g, \sigma)$-Jacobi field $J$ is \emph{normal} if it satisfies  
\begin{equation} \label{eqn:normal} 
\left\langle  \frac{{\rm D}J}{{\rm d}t } , \dot{\gamma} \right\rangle = 0.
\end{equation}
A point $\gamma(\tau)$ is said to be \emph{conjugate} to $\gamma(0)$ along $\gamma$ if there exists a nontrivial normal Jacobi field $J$ along $\gamma$ such that $J(0) = 0$ and $J(\tau)$ is proportional to $\dot{\gamma}(\tau)$. A $(\g,\sigma)$-geodesic $\gamma$ is \emph{without conjugate points} if $\gamma(\tau)$ is not conjugate to $\gamma(0)$ for every $\tau \neq 0$, and the flow $\varPhi^{(\g, \sigma)}_s$ is \emph{without conjugate points} if all of $(\g, \sigma)$-geodesics with speed $s$ are without conjugate points.\footnote{The above definition is related to critical values of a suitable exponential map defined for magnetic systems. For more details, see \cite{Herreros_2012, AdachiBai_2013}.}

In \cite{Assenza_2023}, the first author introduced a notion of magnetic curvature related to the second variation of the action which extends to higher dimensions the well-known Gaussian magnetic curvature introduced by G.\ and M.\ Paternain in \cite{PP96} for surfaces. We briefly recall its definition. Let $E$ and $E^1$ be the bundles of complementary and unit-com\-ple\-men\-ta\-ry directions over the unit sphere bundle $SM$, whose fibers over $v\in SM$ are given by $E_v=v^\perp$ and $E^1_v = v^\perp\cap SM$. Denote by $R$ the Riemann curvature tensor, and for $s > 0$, consider the bundle endomorphisms $R^\Omega_s, A^\Omega : E \rightarrow E$ given by 
\begin{align} 
	  (R^\Omega_s)_v(w) &= s^2 R(w,v)v - s(\nabla_w\Omega)(v)  + \frac{s}{2}\left( (\nabla_{v}\Omega)(w) - \langle (\nabla_{v}\Omega)(w), v \rangle v \right), \label{eqn:ROmega} \\
	 (A^\Omega)_v(w) &= \frac{3}{4}\langle w, \Omega(v)\rangle \Omega(v) - \frac{1}{4} \Omega^2(w) - \frac{1}{4}\langle \Omega(w),\Omega(v)\rangle v. \label{eqn:AOmega}
\end{align}

The \emph{magnetic curvature operator at level $s$} is the section $M^\Omega_s$ of ${\rm End}(E)$ given by
\begin{equation}\label{def:MOmega}
  (M^\Omega_s)_v(w) = (R^\Omega_s)_v(w) + (A^\Omega)_{v}(w).
\end{equation}
The \emph{magnetic sectional curvature} ${\rm sec}^\Omega_s\colon E^1\to \R$ and the \emph{magnetic Ricci curvature} ${\rm Ric}^\Omega_s\colon SM\to \R$ \emph{at level $s$} are given by, respectively:
\begin{equation}\label{def:sec_and_Ric_mag}
 ({\rm sec}^\Omega_s)_v(w) = \left \langle (M^\Omega_s)_{v}(w),w\right\rangle  \quad\mbox{and}\quad {\rm Ric}^\Omega_s(v) = {\rm tr}\,(M^\Omega_s)_{v}.
\end{equation}

We note that the magnetic curvature function captures relevant dynamical information for the magnetic system. For instance, it can be deduced from \cite[Theorem 4.1]{Wojtkowski_2000} that if ${\rm sec}^\Omega_s < 0$, then $\varPhi^{(\g, \sigma)}_s$ is Anosov.\footnote{See also Appendix \ref{appendixAnosov} for a proof.} Under the assumption that ${\rm Ric}^\Omega_s > 0$, it was shown in \cite{Assenza_2023} 
that there exists a closed contractible $(\g, \sigma)$-geodesic for small $s$.\footnote{See \cite{BT98} for previous results in this direction.}

In Proposition \ref{propn:curvature-identity}, we relate $M^\Omega_s$ to \eqref{eqn:magnetic_Jacobi_equation}, and in Proposition \ref{prop:negative_sec_no_conj} we show that if ${\rm sec}^\Omega_s \leq 0$, then $\varPhi^{(\g, \sigma)}_s$ is without conjugate points. As in the Riemannian setting, the converse does not hold in general. An example of a magnetic flow without conjugate points whose magnetic curvature changes sign was constructed \cite[Section 7]{Burns-Paternain_2002}.
Nevertheless, our main result gives a partial converse, generalizing \cite[Theorem 1]{Gouda_1998} from the case where $\nabla \sigma = 0$:

\begin{teoA}\label{teo:magnetic-Hopf}
	Let $(\g,\sigma)$ be a magnetic system on a closed manifold $M$, and let $s> 0$. If $\varPhi^{(\g,\sigma)}_s$ is without conjugate points, then
	\begin{equation}
		\int_{SM} {\rm Ric}^\Omega_s(v) \,{\rm d}\mu_\g(v) \leq 0,
	\end{equation}
where ${\rm d}\mu_{\g}$ denotes the Liouville measure on $SM$.
Moreover, equality holds if and only if $M^\Omega_s=0$. \end{teoA}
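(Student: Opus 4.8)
The plan is to adapt Green's original argument for the Riemannian case to the magnetic Jacobi equation \eqref{eqn:magnetic_Jacobi_equation}, working symplectically on $\Sigma_s$ rather than with the metric structure directly. First I would set up, along each $(\g,\sigma)$-geodesic $\gamma$, the appropriate space of normal $(\g,\sigma)$-Jacobi fields and the associated Riccati-type equation. For a Lagrangian subspace of initial conditions one obtains a solution operator; the no-conjugate-points hypothesis guarantees that the \emph{stable} and \emph{unstable} Green subbundles $G^s, G^u \subset E\oplus E$ (the limits of the spaces of Jacobi fields vanishing at $t=\pm T$ as $T\to\infty$) are well defined along every orbit of $\varPhi^{(\g,\sigma)}_s$, are invariant under the linearized flow, and are graphs of symmetric bundle endomorphisms $U^s, U^u\colon E\to E$. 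Each $U^\bullet$ satisfies the matrix magnetic Riccati equation obtained by substituting $J' = U J$ into \eqref{eqn:magnetic_Jacobi_equation}; by Proposition~\ref{propn:curvature-identity} the curvature term there is exactly $M^\Omega_s$ up to a first-order skew term coming from $\Omega(\mathrm{D}J/\mathrm{d}t)$, and after the standard symmetrization/trace the skew part drops out.

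Next I would take traces. Writing $u^\bullet = \mathrm{tr}\,U^\bullet$ and differentiating the Riccati equation, one gets a scalar inequality of the form $(u^\bullet)' + \tfrac{1}{n-1}(u^\bullet)^2 + \mathrm{Ric}^\Omega_s \le (u^\bullet)' + \|U^\bullet\|^2 + \mathrm{Ric}^\Omega_s = 0$ along each orbit (Cauchy–Schwarz on the Hilbert–Schmidt norm versus the trace, the cross terms with $\Omega$ vanishing under the trace because $\Omega$ is skew-adjoint). Integrating $u^s$ over a long orbit segment and using that $u^s$ is bounded — which is where the compactness of $\Sigma_s$ and the continuity of the Green bundles enter — the total derivative contributes nothing in the limit, so the time average of $\mathrm{Ric}^\Omega_s$ along any orbit is $\le 0$. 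Then integrating against the Liouville measure $\mathrm{d}\mu_\g$ on $SM$ (which is invariant for $\varPhi^{(\g,\sigma)}_s$, since the magnetic flow preserves the canonical symplectic/contact structure and hence Liouville measure on $\Sigma_s$, and the identification $\Sigma_s\cong SM$ by rescaling pushes this forward) and applying Birkhoff/Fubini kills the $\int (u^s)'$ term entirely, yielding $\int_{SM}\mathrm{Ric}^\Omega_s\,\mathrm{d}\mu_\g \le 0$.

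For the equality case, the chain of inequalities must be saturated $\mu_\g$-almost everywhere, hence everywhere by continuity. Equality in Cauchy–Schwarz forces $U^s$ to be a scalar multiple of the identity on $E$ at every point; combined with $G^s = G^u$ (the two Green bundles coincide when the average Ricci vanishes — one shows $\int \|U^s - U^u\|^2 = c\int(u^s - u^u) = 0$ again via integration by parts, since $u^s \ge u^u$ pointwise and their integrals agree) one gets that the Riccati solution is the \emph{unique} bounded symmetric one, and the scalar Riccati equation $(u)' + \tfrac{u^2}{n-1} + \mathrm{Ric}^\Omega_s = 0$ with $u$ bounded forces, together with $\mathrm{Ric}^\Omega_s$ having zero average and the sign constraint, that $u \equiv 0$ and $\mathrm{Ric}^\Omega_s \equiv 0$ along every orbit; hence $U^s = U^u = 0$ and every normal Jacobi field with $J(0)=0$ actually has $\mathrm{D}J/\mathrm{d}t(0)$ in the kernel of the Riccati flow. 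Feeding this back into the full matrix equation shows $(M^\Omega_s)_v(w) = 0$ for all $v\in SM$ and $w\in E_v$, i.e.\ $M^\Omega_s = 0$. Conversely $M^\Omega_s = 0$ makes $\mathrm{Ric}^\Omega_s$ vanish identically, so the integral is zero.

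The main obstacle I anticipate is establishing the regularity and boundedness of the Green bundles in the magnetic setting: unlike the geodesic case, \eqref{eqn:magnetic_Jacobi_equation} is not self-adjoint in the naive sense because of the $\Omega(\mathrm{D}J/\mathrm{d}t)$ term, so one must be careful about which symplectic form on $E\oplus E$ makes the linearized magnetic flow symplectic and hence makes ``Lagrangian'' and ``graph of a symmetric operator'' mean the right thing. Getting the correct symmetrized Riccati equation — and verifying that the first-order magnetic term genuinely drops out under the trace rather than contributing a boundary-like nuisance — is the delicate computational heart of the argument; everything downstream is a faithful transcription of Green's proof.
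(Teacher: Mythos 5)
Your high-level plan matches the paper's: construct Green (sub)bundles from no-conjugate-points, read off a bounded solution of a magnetic Riccati equation, take traces, integrate against Liouville measure, and use flow invariance to discard the $\int \mathrm{tr}\,\dot U$ term. But there is a genuine gap at exactly the place you flag as ``the delicate computational heart,'' and you do not close it. Your argument needs $\mathrm{tr}(U^2)\ge 0$, which requires $U$ to be \emph{symmetric}. You observe that under the trace the skew first-order term ``$\Omega(\mathrm{D}J/\mathrm{d}t)$'' drops out --- true, but irrelevant for the sign of $\mathrm{tr}(U^2)$. For a general (non-symmetric) $U$, $\mathrm{tr}(U^2)$ can be negative, so the inequality you want simply does not follow from the trace of the Riccati equation unless symmetry is established first. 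In the Riemannian case symmetry is free because the Jacobi equation has no first-order term and Lagrangian subspaces of the canonical symplectic form on $T(SM)/\R X$ are graphs of symmetric operators in the horizontal/vertical splitting. In the magnetic setting this breaks down for the naive splitting. The paper's fix is the anisotropic twisted connection $\tD$ of Section~\ref{sec:anisotropic}: Proposition~\ref{propn:curvature-identity} shows that with $\tD$ the magnetic Jacobi equation has \emph{no} first-order term at all ($\tD^2 J^\perp + (M^\Omega_s)J^\perp = 0$), and Lemma~\ref{lem:simplified_Ksigma} shows that, after replacing the Levi-Civita connector $K$ by the twisted connector $K^\sigma$, the twisted symplectic form on $T\Sigma_s/\R X^{\g,\sigma}$ takes the standard ``$\langle\pi_*\xi,K^\sigma\eta\rangle-\langle K^\sigma\xi,\pi_*\eta\rangle$'' form, making Lagrangian graphs correspond precisely to $\g$-self-adjoint operators $v^\perp\to v^\perp$. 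Your proposal acknowledges that ``one must be careful about which symplectic form \ldots makes `graph of a symmetric operator' mean the right thing,'' but offers no construction; this is the missing idea.

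Two smaller remarks. First, your detour through Cauchy--Schwarz on the Hilbert--Schmidt norm is unnecessary: once $U$ is symmetric, $\mathrm{tr}(U^2)\ge 0$ is immediate and that is all the paper uses. Likewise, your equality argument (saturating Cauchy--Schwarz to force $U$ scalar, then showing $G^s=G^u$) is more elaborate than needed; the paper goes directly from $\mathrm{tr}(U_v(0)^2)=0$ and self-adjointness to $U_v(0)=0$ a.e., then uses flow-invariance of the Liouville measure and the conjugation relation $U_{\varPhi_s^{(\g,\sigma)}(\tau,v)}(0)=P_\tau U_v(\tau)P_{-\tau}$ to get $U_v(t)=0$ for all $t$ a.e., hence $\dot U_v(0)=0$ a.e., which fed back into the Riccati equation gives $M^\Omega_s=0$. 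Second, your route to killing $\int \mathrm{tr}\,\dot U$ via Birkhoff/long-time averages is fine in principle but requires uniform boundedness of $u=\mathrm{tr}\,U$; the paper's alternative is cleaner: it identifies $\mathrm{tr}\,\dot U_v(0)$ with $X^{\g,\sigma}(F)(v)$ for $F(v)=\mathrm{tr}\,U_v(0)$ and uses measure-preservation of the magnetic flow to conclude $\int X^{\g,\sigma}(F)\,\mathrm{d}\mu=0$.
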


The strategy of the proof is similar to the one given in \cite{Green_1958}. With the aid of an appropriate twisted connection introduced in Section \ref{sec:anisotropic}, we construct the \emph{magnetic Green bundles} along $(\g,\sigma)$-geodesics without conjugate points in Section \ref{sec:mag_green_bundles}. The existence of such bundles guarantees that we are able to find non-trivial solutions to a suitable \emph{magnetic Riccati equation} (see \eqref{eqn:Riccati-t}), and the result follows quickly. We note that similar geometric constructions were previously introduced in the classical Riemannian case by Green in \cite{Green_1958}, for Finsler metrics without conjugate points in \cite{Foulon_1992}, and in the general context of Hamiltonian dynamics in \cite{ContrerasIturriaga_1999, MarieClaudeArnaud_2008, BialyMacKay_2004}. 

We now observe some consequences of Theorem \ref{teo:magnetic-Hopf}. Using the linearity of $\Omega$, one can deduce
\begin{equation} \label{eqn:integral_RO} \int_{SM} {\rm tr}(R^\Omega_s)_v \,{\rm d} \mu_\g(v) = s^2 \frac{{\rm vol}(\esf^{n-1})}{n} \int_M {\rm scal}(x) \, {\rm d}\nu_\g(x), \end{equation}
where ${\rm d}\nu_g$ is the Riemannian volume measure on $M$.
Combining \eqref{eqn:integral_RO} with the fact that ${\rm tr}(A^\Omega)_v$ is always non-negative and is positive on some open subset of $SM$ as soon as $\sigma$ is not identically zero (see \cite[Lemma 5]{Assenza_2023} or \eqref{eqn:AOmega}), the following corollary holds:
\begin{corA} \label{cor:conjugate_point}   If $\sigma$ is not identically zero, then for every sufficiently small $s$ the flow $\varPhi^{(\g, \sigma)}_s$ has at least one pair conjugate points. Moreover, if $\g$ is a Riemannian metric on $M$ such that 
		\[\int_M {\rm scal}(x) \, {\rm d}\nu_\g(x) > 0,\]
		then for every magnetic form $\sigma$ and every $s > 0$ the flow $\varPhi^{(\g, \sigma)}_s$ has at least one pair conjugate points.
\end{corA}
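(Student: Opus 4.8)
The plan is to derive Corollary \ref{cor:conjugate_point} directly from Theorem \ref{teo:magnetic-Hopf} by arguing contrapositively: if $\varPhi^{(\g,\sigma)}_s$ were without conjugate points, then $\int_{SM}{\rm Ric}^\Omega_s\,{\rm d}\mu_\g \leq 0$, and I would contradict this by showing the integral is strictly positive under each of the two hypotheses. First I would record the pointwise decomposition ${\rm Ric}^\Omega_s(v) = {\rm tr}(R^\Omega_s)_v + {\rm tr}(A^\Omega)_v$ coming from \eqref{def:MOmega} and \eqref{def:sec_and_Ric_mag}, integrate over $SM$, and invoke the identity \eqref{eqn:integral_RO} for the first term together with the sign information on ${\rm tr}(A^\Omega)_v$ recalled from \cite[Lemma 5]{Assenza_2023}: namely ${\rm tr}(A^\Omega)_v \geq 0$ everywhere, with strict inequality on a nonempty open subset of $SM$ whenever $\sigma\not\equiv 0$. (Concretely, from \eqref{eqn:AOmega} one has ${\rm tr}(A^\Omega)_v = \tfrac14\|\Omega(v)\|^2$ after taking the trace of the three terms, which is manifestly non-negative and positive wherever $\Omega(v)\neq 0$.)

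For the first assertion, fix $\sigma\not\equiv 0$. Then $\int_{SM}{\rm tr}(A^\Omega)_v\,{\rm d}\mu_\g(v) =: c > 0$ is a positive constant independent of $s$, while by \eqref{eqn:integral_RO} the term $\int_{SM}{\rm tr}(R^\Omega_s)_v\,{\rm d}\mu_\g(v)$ equals $s^2$ times a constant depending only on $(M,\g)$; in particular it is $O(s^2)$ as $s\to 0^+$. Hence for all sufficiently small $s>0$ we get $\int_{SM}{\rm Ric}^\Omega_s(v)\,{\rm d}\mu_\g(v) = s^2 \frac{{\rm vol}(\esf^{n-1})}{n}\int_M {\rm scal}\,{\rm d}\nu_\g + c > 0$, which by the contrapositive of Theorem \ref{teo:magnetic-Hopf} forces $\varPhi^{(\g,\sigma)}_s$ to have a pair of conjugate points.

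For the second assertion, suppose $\int_M {\rm scal}(x)\,{\rm d}\nu_\g(x) > 0$. Then by \eqref{eqn:integral_RO} the term $\int_{SM}{\rm tr}(R^\Omega_s)_v\,{\rm d}\mu_\g(v)$ is strictly positive for every $s>0$, and adding the non-negative quantity $\int_{SM}{\rm tr}(A^\Omega)_v\,{\rm d}\mu_\g(v)$ keeps it strictly positive; so $\int_{SM}{\rm Ric}^\Omega_s(v)\,{\rm d}\mu_\g(v) > 0$ for all $s>0$ and all choices of $\sigma$, and Theorem \ref{teo:magnetic-Hopf} again yields a conjugate pair. The argument is essentially bookkeeping once the inputs are in place; the only point requiring care is making sure the integral identity \eqref{eqn:integral_RO} and the sign/positivity claim for ${\rm tr}(A^\Omega)$ are cited or verified precisely, since those are exactly where the geometry enters. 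I do not expect a genuine obstacle here, as both facts are already established (the former stated in the excerpt, the latter in \cite{Assenza_2023}); the proof is a short deduction.
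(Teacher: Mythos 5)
Your argument is exactly the paper's: the corollary follows by combining Theorem~\ref{teo:magnetic-Hopf} (read contrapositively) with the integral identity \eqref{eqn:integral_RO} and the observation that ${\rm tr}(A^\Omega)_v \geq 0$ with strict inequality on a nonempty open set of $SM$ whenever $\sigma\not\equiv 0$, so there is nothing to flag about the overall approach. One small correction in your parenthetical computation, though: taking the trace of \eqref{eqn:AOmega} over $E_v = v^\perp$ with an orthonormal basis $e_1=v,e_2,\dots,e_n$, the first term contributes $\tfrac34\|\Omega(v)\|^2$, the third term vanishes, and the second term contributes $\tfrac14\sum_{i\geq 2}\|\Omega(e_i)\|^2 = \tfrac14\bigl(|\Omega|^2 - \|\Omega(v)\|^2\bigr)$ (with $|\Omega|^2 = \sum_i\|\Omega(e_i)\|^2$ the Frobenius norm squared at $\pi(v)$) rather than dropping out, so ${\rm tr}(A^\Omega)_v = \tfrac12\|\Omega(v)\|^2 + \tfrac14|\Omega|^2$, not $\tfrac14\|\Omega(v)\|^2$. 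On a surface this gives $\tfrac12 b^2 + \tfrac14(2b^2) = b^2$, consistent with the paper's formula for $K^{\g,b}_s$. The corrected expression is still non-negative and is in fact positive at \emph{every} $v$ over a point where $\Omega\neq 0$, which only strengthens the positivity claim you need, so the conclusion of your argument is unaffected.
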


This can be interpreted as saying that if the magnetic deformation is strong, then the magnetic flow must have at least one pair conjugate points. Naturally, this leads us to the following question: \emph{given a Riemannian metric $\g$ with at least one pair conjugate points, does there exist a closed $2$-form $\sigma$ and a value $s$ such that the corresponding flow $\varPhi^{(\g, \sigma)}_s$ is without conjugate points?} While Corollary \ref{cor:conjugate_point} points to a negative answer for small values of $s$, the question is open to our best knowledge.
Some progress towards this question has been made by Bialy in \cite{Bialy_2000}, where it is shown that if $\g$ is conformally flat on the $n$-torus $\mathbb{T}^n$, then for every $s$ the magnetic flow $\varPhi^{(\g, \sigma)}_{s}$ is without conjugate points if and only if $\sigma$ is identically zero and the metric is flat. Bialy also conjectured that such a result should hold for arbitrary metrics, which would generalize \cite{Burago-Ivanov_1994} to the magnetic case. 

If $M$ is an orientable surface, then every closed $2$-form $\sigma$ is related to the area form $\nu_\g$ by a smooth function $b$, in the sense that $\sigma = b \nu_\g$. We denote the magnetic system $(\g, b \nu_g)$ by $(\g, b)$. In this setting, the magnetic curvature functions ${\rm sec}^\Omega_s$ and ${\rm Ric}^\Omega_s$ reduce to the \emph{Gaussian magnetic curvature at level $s$}, which is given by 
\begin{equation*}\label{eqn:mag_Gaussian}
	 K^{\g,b}_s(x,v) = s^2K^{\g}(x) - s\,{\rm d}b_x({\rm i}v) + b(x)^2;
\end{equation*}
see  \cite[Lemma 8]{Assenza_2023}.
Here, ${\rm i}v$ is the vector obtained by rotating $v$ by $\pi/2$ according to the fixed orientation, and $K^{\g}\colon M\to\R$ is the Gaussian curvature. As a consequence of the Gauss-Bonnet theorem, we have
\[ \int_{SM} K^{\g, b}_s(x,v) \, {\rm d} \mu_\g(x,v) = 2\pi \left[s^2 \chi(M) + \int_M b^2(x) \, {\rm d}\nu_{\g}(x) \right],\]
where $\chi(M)$ is the Euler characteristic of $M$. As pointed out in Corollary \ref{cor:conjugate_point}, every magnetic flow on $\esf^2$ has conjugate points. In the case of a magnetic flow on $\mathbb{T}^2$, we are able to recover \cite[Theorem 1]{Bialy_2000}:
\begin{corA}
	The only magnetic flows on $\mathbb{T}^2$ without conjugate points are of the form $\varPhi^{(\g_0, 0)}_s$, where $\g_0$ is a flat metric.
\end{corA}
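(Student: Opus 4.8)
The plan is to combine Theorem~\ref{teo:magnetic-Hopf} with the Gauss--Bonnet computation displayed just above the statement, specialized to $M = \mathbb{T}^2$. Suppose $\varPhi^{(\g,b)}_s$ is without conjugate points on $\mathbb{T}^2$. By Theorem~\ref{teo:magnetic-Hopf}, we have $\int_{SM} {\rm Ric}^\Omega_s(v)\,{\rm d}\mu_\g(v) \leq 0$, with equality precisely when $M^\Omega_s \equiv 0$. In the surface case ${\rm Ric}^\Omega_s$ coincides with the Gaussian magnetic curvature $K^{\g,b}_s$, so the displayed Gauss--Bonnet identity gives
\[
\int_{SM} K^{\g,b}_s(x,v)\,{\rm d}\mu_\g(x,v) = 2\pi\left[ s^2\chi(\mathbb{T}^2) + \int_M b^2(x)\,{\rm d}\nu_\g(x)\right] = 2\pi \int_M b^2(x)\,{\rm d}\nu_\g(x),
\]
since $\chi(\mathbb{T}^2) = 0$. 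The left-hand side is $\leq 0$ by Theorem~\ref{teo:magnetic-Hopf}, while the right-hand side is manifestly $\geq 0$; hence both vanish. From $\int_M b^2\,{\rm d}\nu_\g = 0$ we conclude $b \equiv 0$, i.e.\ $\sigma = 0$, so the system is purely Riemannian and $\varPhi^{(\g,0)}_s$ is the geodesic flow of $\g$ without conjugate points.

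It remains to see that $\g$ is flat. Two routes are available. The first is to invoke E.~Hopf's original theorem \cite{Hopf_1948}: a metric without conjugate points on $\mathbb{T}^2$ is flat. The second, more self-contained within the present framework, is to note that the integral of ${\rm Ric}^\Omega_s$ also vanishes (both sides of the Gauss--Bonnet identity are zero), so the equality case of Theorem~\ref{teo:magnetic-Hopf} forces $M^\Omega_s \equiv 0$; with $\Omega = 0$ the operator $M^\Omega_s$ reduces to $(M^\Omega_s)_v(w) = s^2 R(w,v)v$, whose vanishing on all of $E^1$ is exactly the statement that the sectional curvature of $\g$ is identically zero. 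Either way, $\g = \g_0$ is flat, and thus every magnetic flow on $\mathbb{T}^2$ without conjugate points has the asserted form $\varPhi^{(\g_0,0)}_s$.

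The converse direction is immediate and should be remarked for completeness: if $\g_0$ is flat, then its geodesic flow on each $\Sigma_s$ is without conjugate points (the Jacobi equation \eqref{eqn:magnetic_Jacobi_equation} with $R = 0$, $\Omega = 0$ has only linear solutions $J(t) = J(0) + t\,\tfrac{{\rm D}J}{{\rm d}t}(0)$, which never produce a conjugate point). Hence the flows listed are exactly the conjugate-point-free magnetic flows on $\mathbb{T}^2$.

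There is no serious obstacle here: the argument is a clean pinch between the sign from Theorem~\ref{teo:magnetic-Hopf} and the sign from Gauss--Bonnet on a surface of zero Euler characteristic. The only point requiring a little care is making sure ${\rm Ric}^\Omega_s$ really reduces to $K^{\g,b}_s$ in dimension two (so that the trace in \eqref{def:sec_and_Ric_mag} matches the scalar quantity in the Gauss--Bonnet line), which is recorded in \cite[Lemma 8]{Assenza_2023} and already quoted in the excerpt, and then deciding whether to cite Hopf's classical theorem or instead extract flatness directly from the equality case $M^\Omega_s \equiv 0$; I would do the latter to keep the proof internal to the paper.
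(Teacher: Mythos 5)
Your proposal is correct and matches the paper's intended argument: the paper sets up the Gauss--Bonnet identity and the reduction ${\rm Ric}^\Omega_s = K^{\g,b}_s$ precisely so that the corollary follows by pinching the sign of $\int_{SM} K^{\g,b}_s$ between Theorem~\ref{teo:magnetic-Hopf} and the term $2\pi\int_M b^2\,{\rm d}\nu_\g$ once $\chi(\mathbb{T}^2)=0$, forcing $b\equiv 0$ and, via the equality case $M^\Omega_s\equiv 0$ with $\Omega=0$, the vanishing of the Gaussian curvature. Both your routes to flatness (citing Hopf's original theorem, or extracting it from $M^\Omega_s\equiv 0$ directly) are valid, and the latter is indeed the more self-contained choice.
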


Summarizing, the genus of the surface must be at least two for there to be a non-trivial magnetic system without conjugate points. If we assume that $(\g, b)\neq (\g_0,0)$, then $K^{\g, b}_s \equiv 0$ if and only if $b$ is constant, $\g$ has constant negative curvature $k$, and $s = |b|/\sqrt{-k}$. Observe that if this happens, then 
 $\varPhi^{(\g, b)}_s$ is the horocycle flow \cite[Section 5.2]{CieliebakFrauenfelderPaternain_2010}. In particular, 
this allows us to view the horocycle flow on hyperbolic surfaces as the magnetic analogue of $\mathbb{T}^2$ equipped with a flat metric. 

In order to bring the discussion to higher dimensions, we recall that the \emph{Ma\~{n}\'{e} critical value} $s_0 = s_0(\g, \sigma) \in [0,+\infty]$ of the pair $(\g, \sigma)$ is defined as
\begin{equation} \label{eqn:mane}
	s_0 = \begin{cases} {\displaystyle \inf_{d\theta =  p^*\sigma} \sup_{x \in \tilde{M}}} \|\theta_x\| & \text{if } p^* \sigma \text{ is exact}, \\ +\infty &\text{otherwise,} \end{cases}
\end{equation}
where $p : \tilde{M} \rightarrow M$ is the universal covering map and $\|\cdot\|$ denotes the dual norm induced by $\g$. One can immediately see that $s_0$ is finite if and only if $p^* \sigma$ admits a bounded primitive, and $s_0 = 0$ if and only if $\sigma = 0$. The Ma\~{n}\'{e} critical value marks a drastic change of the dynamics of $\varPhi^{(\g, \sigma)}_s$, and its computation is challenging in general. For more details and alternative definitions of $s_0$, we point the reader to \cite{Merry_2010,CieliebakFrauenfelderPaternain_2010}, and references within. In the special case when $(M, \g)$ is a K\"{a}hler manifold with K\"{a}hler form $\omega$ and constant holomorphic sectional curvature $\text{sec}^{\text{hol}} \equiv k < 0$, we have
\begin{equation} \label{eqn:mane2}
	s_0(\g, \lambda \omega) = \frac{|\lambda|}{\sqrt{-k}};
\end{equation}
see Appendix \ref{appendixA}.

We say that a magnetic system $(\g, \sigma)$ is \emph{magnetically flat at level $s > 0$} if $M^\Omega_s \equiv 0$. Just like in the surface case, it turns out that being magnetically flat is a rigid condition, and there are only two types of magnetic systems $(\g,\sigma)$ and levels $s$ for which it is satisfied: when $\sigma = 0$ and $\g$ is flat (in which case $s > 0$ can be anything), or when $(\g, \lambda\omega)$ is a K\"{a}hler magnetic system on a K\"{a}hler manifold with constant negative holomorphic sectional curvature and $s=s_0(\g, \lambda \omega)$. This is a consequence of the following theorem,
which we prove in Section \ref{sec:mag_flatness}:


\begin{teoA} \label{thm:mag_flat}
	Let $(\g, \sigma)$ be a magnetic system on a (not necessarily closed) manifold $M$ with $\sigma \neq 0$. If ${\rm sec}^\Omega_s \equiv c$ for some $s > 0$, then $\nabla \sigma = 0$, $\sigma$ is nowhere vanishing, and one of the following conclusions must hold:
	\begin{enumerate}[\normalfont (i)]
		\item \label{thm:mag_flat3} $(M,\g)$ is an oriented surface whose Gaussian curvature is constant and equal to \newline${(c-  \|\Omega\|^2)/s^2}$, and $\sigma = - \|\Omega\| \nu_g$;
		\item $\dim M \geq 4$, $c = 0$, and $\|\Omega\|^{-1} \Omega$ makes $(M, \g)$ a K\"{a}hler manifold with constant holomorphic sectional curvature equal to $-\|\Omega\|^2/s^2$.
		
	\end{enumerate}
	Moreover, assuming that $c=0$ in {\rm(i)}, we have that $s = s_0(\g, \sigma)$ in either case.
\end{teoA}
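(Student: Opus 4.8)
The plan is to collapse the hypothesis ${\rm sec}^\Omega_s \equiv c$ into a single pointwise identity and then read each conclusion off from it. Using that $\Omega$ and each $\nabla_v\Omega$ are skew-adjoint, and that $\langle v,w\rangle=0$ on $E^1$, a direct substitution into \eqref{eqn:ROmega}, \eqref{eqn:AOmega}, \eqref{def:MOmega}, \eqref{def:sec_and_Ric_mag} gives, for every orthonormal pair $(v,w)$,
\begin{equation*}
c \;=\; s^2 K(v,w) - s\,\langle (\nabla_w\Omega)(v), w\rangle + \tfrac34\langle \Omega(v), w\rangle^2 + \tfrac14\|\Omega(w)\|^2,
\end{equation*}
where $K(v,w)$ denotes the sectional curvature of ${\rm span}\{v,w\}$; the terms $\langle(\nabla_v\Omega)(w),w\rangle$ and $\langle\Omega(w),\Omega(v)\rangle\langle v,w\rangle$ drop out. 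Everything below is a pointwise manipulation of this identity.

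First I would show $\nabla\sigma=0$. Substituting $-v$ for $v$ leaves every term above unchanged except $-s\langle(\nabla_w\Omega)(v),w\rangle$, which changes sign, so $\langle(\nabla_w\Omega)(v),w\rangle=0$ for all orthonormal $(v,w)$; together with skew-adjointness of $\nabla_w\Omega$ this gives $\langle(\nabla_w\Omega)(v),w\rangle=0$ for every $v$, so $w$ is orthogonal to the image of the skew-adjoint operator $\nabla_w\Omega$, whence $(\nabla_w\Omega)(w)=0$ for every $w$. Polarizing yields $(\nabla_X\Omega)(Y)=-(\nabla_Y\Omega)(X)$, so the trilinear form $T(X,Y,Z)=\langle(\nabla_X\Omega)(Y),Z\rangle$ is alternating in the first two slots; it is alternating in the last two by skew-adjointness, hence $T$ is a $3$-form, and the closedness of $\sigma$ then reads $3T=0$, i.e.\ $\nabla\Omega=0$.

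With $\nabla\Omega=0$ the identity becomes algebraic: $s^2 K(v,w)+\tfrac34\langle\Omega(v),w\rangle^2+\tfrac14\|\Omega(w)\|^2=c$. Swapping $v$ and $w$ and subtracting kills the first two terms and leaves $\|\Omega(v)\|=\|\Omega(w)\|$ for all orthonormal $(v,w)$; applied to an orthonormal eigenbasis of $-\Omega^2$ (whose quadratic form is $v\mapsto\|\Omega(v)\|^2$), this forces all its eigenvalues to coincide, i.e.\ $-\Omega^2=\rho^2\,{\rm id}$ for some $\rho\geq0$, which is constant since $\nabla\Omega=0$ and positive since $\sigma\neq0$. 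Hence $\dim M$ is even, $\sigma$ is nowhere vanishing, and $J:=\rho^{-1}\Omega$ is a parallel $\g$-orthogonal almost complex structure, so $(M,\g,J)$ is K\"ahler; moreover $\|\Omega\|=\rho$ and $\langle\Omega(v),w\rangle=\rho\langle Jv,w\rangle$, so the identity reads $s^2 K(v,w)=c-\tfrac14\rho^2-\tfrac34\rho^2\langle Jv,w\rangle^2$. If $\dim M=2$, then $w=\pm Jv$, so $K\equiv(c-\rho^2)/s^2$; fixing the orientation so that $b<0$ where $\sigma=b\,\nu_\g$ puts us in case \eqref{thm:mag_flat3}. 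If $\dim M\geq4$, the sectional curvature has the form $\alpha+\beta\langle Jv,w\rangle^2$ with $\alpha=(c-\rho^2/4)/s^2$ and $\beta=-3\rho^2/(4s^2)$; since the sectional curvatures determine the curvature tensor, $R=(\alpha-\tfrac\beta3)R_1+\tfrac\beta3\,R_{\mathrm{FS}}$, where $R_1$ and $R_{\mathrm{FS}}$ are the model tensors of constant curvature $1$ and of a complex space form. Both $R$ (being K\"ahler) and $R_{\mathrm{FS}}$ satisfy $\mathcal R(J\cdot,J\cdot)=\mathcal R$, so $(\alpha-\tfrac\beta3)R_1$ does too; but $R_1$ violates this identity in dimension $\geq4$, forcing $\alpha=\tfrac\beta3$, hence $c=0$, and then the holomorphic sectional curvature is $\alpha+\beta=-\rho^2/s^2=-\|\Omega\|^2/s^2$, which is case (ii). Finally, when $c=0$, the Gaussian curvature in case \eqref{thm:mag_flat3} equals $-\|\Omega\|^2/s^2$ with $\sigma=-\|\Omega\|\nu_\g$, and in case (ii) $\sigma$ is a constant multiple of the K\"ahler form; in both cases \eqref{eqn:mane2} (with the area form playing the role of $\omega$ when $\dim M=2$) gives $s_0(\g,\sigma)=\|\Omega\|/\sqrt{\|\Omega\|^2/s^2}=s$.

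The main work is in the dimension $\geq4$ case: one has to recognize that the algebraic constraint says precisely that $(M,\g)$ is a K\"ahler manifold of constant holomorphic sectional curvature — hence a complex space form — and then use the rigidity of complex space forms (concretely, the failure of the constant-curvature model tensor to satisfy the K\"ahler curvature symmetry in dimension $\geq4$) to eliminate the additive constant and conclude $c=0$. The delicate point is bookkeeping the normalization constants so that this comes out as $c=0$ exactly; the rest — the vanishing of $\nabla\Omega$, the nonvanishing of $\sigma$, the surface case, and the Ma\~n\'e computation — follows routinely from the pointwise identity and \eqref{eqn:mane2}.
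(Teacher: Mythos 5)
Your proof is correct and follows the paper's overall strategy almost step for step: derive the pointwise identity for ${\rm sec}^\Omega_s$, use the $v\mapsto -v$ substitution to kill the $\nabla\Omega$ term, polarize and invoke ${\rm d}\sigma=0$ to get $\nabla\sigma=0$, swap $(v,w)$ to get conformality of $\Omega$ and hence a parallel complex structure, and then read off the sectional curvature constraint. The one place where you diverge is the final step establishing $c=0$ for $\dim M\geq 4$. The paper first observes that ${\rm sec}^{\rm hol}$ is constant (by setting $w=Jv$), invokes the known closed formula \eqref{eqn:Riemann_Kahler_H} for the curvature of a complex space form to compute ${\rm sec}^\Omega_s$ as $\tfrac{c}{4}(1+3\langle v,Jw\rangle^2)$, and argues by Cauchy--Schwarz that the constant value $c$ forces this to be zero in higher dimensions. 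You instead note directly that the sectional curvature has the form $\alpha+\beta\langle Jv,w\rangle^2$, recover $R$ as a linear combination of the constant-curvature model and the complex-space-form model (using that sectional curvature determines an algebraic curvature tensor), and then use the K\"ahler curvature symmetry---which the constant-curvature model violates in dimension $\geq 4$---to kill the constant-curvature part. Both arguments are valid and of comparable length; the paper's is more computational and self-contained (everything reduces to one scalar identity and Cauchy--Schwarz), while yours is more structural and avoids an explicit appeal to \eqref{eqn:Riemann_Kahler_H} and \eqref{eqn:magsec_Kahler}, at the cost of invoking the general fact that sectional curvature determines the curvature tensor. One minor bookkeeping remark: when you verify $\alpha-\tfrac{\beta}{3}=c/s^2$, this is exactly the coefficient whose vanishing forces $c=0$, so it is worth spelling this arithmetic out; otherwise the argument is complete.
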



We conclude by connecting the above discussion with the problem of existence of closed $(\g, \sigma)$-geodesics with speed $s$.
The literature around this problem is vast; for more details, see \cite{Contreras_2006, Merry_2010, Asselle_Benedetti_2016}.
In general, existence depends on whether $s$ is above or below $s_0$. Indeed, for every $s>s_0$, there exists a closed $(\g, \sigma)$-geodesic with speed $s$ \cite[Theorem 27]{Contreras_Iturriaga_Paternain_2000}. Moreover, if $M$ is not simply connected and $s > s_0$, then for every non-trivial free homotopy class contains a $(\g, \sigma)$-geodesic with speed $s$ \cite{Asselle_Benedetti_2016}. On the other hand, for almost every sufficiently small $s<s_0$, there exists a contractible closed $(\g, \sigma)$-geodesic with speed $s$ \cite[Corollary 3.5]{Schlenk_2006}. It is conjectured that the latter conclusion in fact holds for all $s \in (0,s_0)$. To our best knowledge, the only examples of magnetic flows without closed $(\g, \sigma)$-geodesics are the ones of the form $\varPhi^{(\g, \lambda \omega)}_{s_0}$, where $(\g, \lambda \omega)$ is a K\"ahler magnetic system with constant negative holomorphic sectional curvature and $s_0$ is the Ma\~{n}\'{e} critical value.
This raises the following question: \emph{if there are no closed $(\g, \sigma)$-geodesics with speed $s$, is the system $(\g, \sigma)$ magnetically flat at level $s$?}
%

\section*{Acknowledgements and funding}
	We would like to thank Jacopo de Simoi and Andrey Gogolev for their support throughout this project. We would also like to thank Johanna Bimmerman for her notes on closed orbits for K\"{a}hler manifolds. Finally, we would like to thank Gabriele Benedetti, Andrey Gogolev, Leonardo Macarini, Gabriel Paternain, and Jacopo de Simoi for feedback on early drafts of the paper.

All authors were supported by NSERC Discovery Grant RGPIN-2022-04188. The first author was support by Deutsche Forschungsgemeinschaft (DFG, German Research Foundation) – Project-ID 281071066 –- TRR 191 and by the Instituto Serrapilheira. The second and third authors were supported by NSF DMS-2247747. The third author was also supported by FAPESP--OSU Regular Research Award 2015/50265-6.

	\section{The anisotropic objects $\tOmega$ and $\tD$} \label{sec:anisotropic}

In this section, we fix a magnetic system $(\g,\sigma)$ on a (not necessarily closed) manifold $M$, and denote by $\pi\colon TM\to M$ the natural projection. We construct the anisotropic Lorentz force and the anisotropic twisted connection, which will be useful tools for the calculations done in Sections \ref{sec:curvature-identity}, \ref{sec:mag_green_bundles}, and \ref{sec:proof_thm_a}.

\subsection{The anisotropic Lorentz force} The \emph{anisotropic Lorentz force} $\tOmega$ is defined by assigning to every $v\in SM$ the skew-adjoint operator $\tOmega_v\colon T_{\pi(v)}M \to T_{\pi(v)}M$ given by
\begin{equation}\label{eqn:tOmega-def}
    \widetilde{\Omega}_v(w) = \Omega(w)^\top + \Omega(w^\top)+ \frac{1}{2} \Omega(w^\perp)^\perp,
\end{equation}
where $[\,\cdot\,]^\top$ and $[\,\cdot\,]^\perp$ denote the projections onto $\R v$ and $v^\perp$, according to the direct sum decomposition $T_{\pi(v)}M = \R v\oplus v^\perp$. A straightforward computation yields the alternative formula
\begin{equation}\label{eqn:tOmega_alternative}
    \widetilde{\Omega}_v(w) = \frac{1}{2}\left(\langle  \Omega(w),v\rangle v + \langle w,v\rangle \Omega(v) + \Omega(w)\right)
\end{equation}
for $\tOmega$. In particular, note that $\tOmega_{v}$ is completely characterized by the relations
\begin{equation}\label{eqn:prop_tOmega}
  {\rm i})~\tOmega_v(v) = \Omega(v)\quad {\rm ii})~\langle\tOmega_v(w),v\rangle = \langle \Omega(w),v\rangle\quad{\rm iii})~\tOmega_v(w^\perp)^\perp = \dfrac{1}{2}\Omega(w^\perp)^\perp
\end{equation}
imposed on all $w\in T_{\pi(v)}M$. Repeatedly applying \eqref{eqn:tOmega_alternative} and recalling \eqref{eqn:AOmega} yields
\begin{equation}\label{eqn:tOmega2}
    \tOmega_v^2(w) =-(A^\Omega)_v(w^\perp) + \frac{1}{2} (\Omega^2(w^\top) + \Omega^2(w)^\top).
    \end{equation}

 \subsection{The anisotropic twisted connection}   

It is well-known that, if $\sigma \neq 0$, there is no affine connection on $M$ whose geodesics are the $(\g,\sigma)$-geodesics of our fixed magnetic system \cite[Proposition 2.1]{GLH_2005}. Thus, given a $(\g,\sigma)$-geodesic $\gamma\colon I\to M$ with constant speed $s>0$, we may search for a covariant derivative operator $\tD$, defined only along $\gamma$, which describes geometric properties of $(\g,\sigma)$ in the same way that the Levi-Civita covariant derivative ${\rm D}/{\rm d}t$ does so for the metric $\g$.

The first two desired properties are (i) $\tD \dot{\gamma}=0$, and (ii) metric compatibility. Condition (i) should ultimately be equivalent to \eqref{eqn:Landau-Hall} and, in the presence of (ii), implies that $\tD$-derivatives of vector fields proportional to $\dot{\gamma}$ remain proportional to $\dot{\gamma}$. Condition (ii) also implies that $\tD$-derivatives of vector fields orthogonal to $\dot{\gamma}$ remain orthogonal to $\dot{\gamma}$. This allows us to formally deduce, from \eqref{eqn:magnetic_Jacobi_equation}, that $\tD^2J^\perp + A\tD J^\perp + BJ^\perp = 0$ for every $(\g,\sigma)$-Jacobi field $J$ along $\gamma$, for suitable endomorphisms $A$ and $B$ of the pullback bundle $\gamma^*E$. The last desired condition, inspired by the Riemannian situation, is (iii) $A = 0$ and $B = (M^\Omega_s)_{\dot{\gamma}/s}$.

As we will see in Proposition \ref{propn:curvature-identity}, the covariant derivative operator $\tD$ acting on vector fields $V$ along $\gamma$ by
 \begin{equation}\label{eqn:Dtilde}
   \tD V = \frac{{\rm D}V}{{\rm d}t} - \tOmega_{\dot{\gamma}/s}(V),
 \end{equation}for $\tOmega$ is as in \eqref{eqn:tOmega-def}, satisfies all the requirements.

\smallskip
 
\noindent {\bf Warning:} For economy of notation, we will write simply $\tOmega$ instead of $\tOmega_{\dot{\gamma}/s}$ when using \eqref{eqn:Dtilde} in the next sections.
 
 \section{Curvature identities} \label{sec:curvature-identity}

The main goal of this section is to establish the following result:
 
 \begin{prop}\label{propn:curvature-identity}
   Let $(\g,\sigma)$ be a magnetic system on a (not necessarily closed) manifold $M$, $\gamma\colon I\to M$ be a $(\g,\sigma)$-geodesic with speed $s>0$, and $J$ be a normal $(\g,\sigma)$-Jacobi field along $\gamma$. Then
   \begin{equation} \label{eqn:Jacobi}
     \tD^2 J^\perp + (M^\Omega_s)_{\dot{\gamma}/s}(J^\perp) = 0
   \end{equation}on $I$.
 \end{prop}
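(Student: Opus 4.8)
The plan is to start from the magnetic Jacobi equation \eqref{eqn:magnetic_Jacobi_equation} and decompose everything along the splitting $T_{\gamma(t)}M = \R\dot\gamma(t)\oplus\dot\gamma(t)^\perp$, then rewrite the result purely in terms of $\tD$ and $M^\Omega_s$ using the definition \eqref{eqn:Dtilde}. First I would record the basic facts that follow from $\gamma$ being a $(\g,\sigma)$-geodesic: $\|\dot\gamma\|\equiv s$, $\tfrac{{\rm D}\dot\gamma}{{\rm d}t}=\Omega(\dot\gamma)$, and hence $\tD\dot\gamma = \Omega(\dot\gamma) - \tOmega_{\dot\gamma/s}(\dot\gamma) = 0$ by property (i) in \eqref{eqn:prop_tOmega} (after rescaling). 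I would also check metric compatibility of $\tD$ — this is immediate from skew-adjointness of $\tOmega_{\dot\gamma/s}$ and metric compatibility of ${\rm D}/{\rm d}t$ — which guarantees that $(\cdot)^\perp$ and $(\cdot)^\top$ commute with $\tD$ in the appropriate sense, and in particular that $\tD J^\perp = (\tD J)^\perp$. Next I would split $J = J^\top + J^\perp$ with $J^\top = f\dot\gamma$ for a scalar function $f$; the normality condition \eqref{eqn:normal} controls $f$ and lets me discard the tangential part, so the whole argument reduces to an identity for $J^\perp$.

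The core computation is to expand $\tD^2 J^\perp$. Using \eqref{eqn:Dtilde} twice,
\[
\tD^2 J^\perp = \frac{{\rm D}^2 J^\perp}{{\rm d}t^2} - 2\,\tOmega\!\left(\frac{{\rm D}J^\perp}{{\rm d}t}\right) - \left(\frac{{\rm D}}{{\rm d}t}\tOmega\right)(J^\perp) + \tOmega^2(J^\perp),
\]
where $\tOmega = \tOmega_{\dot\gamma/s}$ and $\tfrac{{\rm D}}{{\rm d}t}\tOmega$ means the covariant derivative of the endomorphism field $t\mapsto \tOmega_{\dot\gamma(t)/s}$ along $\gamma$; this last term will produce a $(\nabla_{\dot\gamma}\Omega)$-type contribution plus a term involving $\tfrac{{\rm D}\dot\gamma}{{\rm d}t}=\Omega(\dot\gamma)$ differentiated inside $\tOmega$, which one evaluates via \eqref{eqn:tOmega_alternative}. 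On the other hand, from \eqref{eqn:magnetic_Jacobi_equation} I substitute $\tfrac{{\rm D}^2 J}{{\rm d}t^2} = -R(J,\dot\gamma)\dot\gamma + (\nabla_J\Omega)(\dot\gamma) + \Omega(\tfrac{{\rm D}J}{{\rm d}t})$, project onto $\dot\gamma^\perp$, and re-express $\tfrac{{\rm D}J}{{\rm d}t}$ in terms of $\tD J$. Matching the two expressions and collecting terms, the goal is to see the right-hand side assemble exactly into $-(M^\Omega_s)_{\dot\gamma/s}(J^\perp) = -(R^\Omega_s)_{\dot\gamma/s}(J^\perp) - (A^\Omega)_{\dot\gamma/s}(J^\perp)$; here the formula \eqref{eqn:tOmega2} for $\tOmega^2$ is precisely what converts the quadratic-in-$\Omega$ leftover into $-(A^\Omega)$, and the $s$-scalings in \eqref{eqn:ROmega} come from the factor $1/s$ inside $\tOmega_{\dot\gamma/s}$ together with $\|\dot\gamma\|=s$.

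The main obstacle is the bookkeeping in the cross terms: one must carefully track how the tangential and normal projections $[\,\cdot\,]^\top$, $[\,\cdot\,]^\perp$ interact with ${\rm D}/{\rm d}t$ (they are not parallel, since $\tfrac{{\rm D}}{{\rm d}t}(\dot\gamma/s) = \Omega(\dot\gamma)/s \ne 0$), and to verify that all the genuinely tangential contributions either cancel or are killed by the final $[\,\cdot\,]^\perp$ projection, using the normality hypothesis where needed. In particular the identity $\tD J^\perp = (\tD J)^\perp$ requires knowing that $\langle \tD J^\perp, \dot\gamma\rangle = 0$, which follows from metric compatibility plus $\tD\dot\gamma = 0$; and the asymmetry of the coefficients $1$ and $1/2$ in \eqref{eqn:tOmega-def} is exactly calibrated so that the "$-2\tOmega(\tD J^\perp)$" cross term and the half of $\Omega(\tfrac{{\rm D}J}{{\rm d}t})$ that survives projection combine correctly — I would double-check this against the characterization \eqref{eqn:prop_tOmega}. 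Once the algebra closes, the proposition follows immediately on $I$.
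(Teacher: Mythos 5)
Your proposal follows essentially the same route as the paper's proof: expand $\tD^2$ via \eqref{eqn:Dtilde}, substitute the magnetic Jacobi equation, use \eqref{eqn:tOmega2} to produce $-(A^\Omega)$, and rely on normality and the $1/2$-factor built into \eqref{eqn:tOmega-def} to eliminate the cross terms after applying ${\rm P}^\perp$. The only organizational difference is that the paper expands $\tD^2 J$ rather than $\tD^2 J^\perp$ and projects only at the very end, exploiting that $\tD$ commutes with ${\rm P}^\perp$ (since $\tD$ is metric-compatible and $\tD\dot\gamma = 0$), which sidesteps the ${\rm D}/{\rm d}t$-versus-${\rm P}^\perp$ commutator bookkeeping you correctly flag as the main nuisance.
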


 Here, we will need to consider the projections ${\rm P}^\top$ and ${\rm P}^\perp$, acting on vector fields $V$ along $\gamma$ via 
\begin{equation*}
	{\rm P}^\top(V) = V^\top =  s^{-2}\langle V,\dot{\gamma}\rangle \dot{\gamma}\quad\mbox{and}\quad {\rm P}^\perp(V) = V^\perp = V - s^{-2}\langle V,\dot{\gamma}\rangle \dot{\gamma}.
\end{equation*}
Denoting Levi-Civita covariant derivatives with dots, a straightforward computation shows that $({\rm P}^\top)^{\cdot} = [\Omega, {\rm P}^\top]$ and $({\rm P}^\perp)^{\cdot} = [\Omega, {\rm P}^\perp]$. With these commutator identities in place, it follows from \eqref{eqn:tOmega-def} and from differentiating $\tOmega(\dot{\gamma}) = \Omega(\dot{\gamma})$ that
 \begin{equation}\label{eqn:tOmega-dot}
  {\rm i})~ {\rm P}^\perp \circ \dot{\tOmega}\circ {\rm P}^\perp = \frac{1}{2}    {\rm P}^\perp \circ \dot{\Omega}\circ {\rm P}^\perp \quad\mbox{and}\quad {\rm ii})~\dot{\tOmega}(\dot{\gamma}) = \dot{\Omega}(\dot{\gamma}) + \frac{1}{2} \Omega^2(\dot{\gamma})^\perp,
 \end{equation}respectively.

 \begin{proof}[Proof of Proposition \ref{propn:curvature-identity}]
 	We start by computing
 	\begin{equation}\label{eqn:curvature-identity}
 		\tD^2J = \tD(\dot{J}- \tOmega(J)) = \ddot{J} - 2\tOmega(\dot{J})-\dot{\tOmega}(J) + \tOmega^2(J),
 	\end{equation}
 	and substituting the $(\g,\sigma)$-Jacobi equation \eqref{eqn:magnetic_Jacobi_equation} together with \eqref{eqn:tOmega2} to obtain
 	\begin{equation}\label{eqn:D2J}
 		\begin{split}
 			\tD^2J &= R(\dot{\gamma},J)\dot{\gamma} + (\nabla_J\Omega)(\dot{\gamma}) + \Omega(\dot{J}) - 2\tOmega(\dot{J}) \\ &\quad - \dot{\tOmega}(J) -(A^\Omega)_{\dot{\gamma}/s}(J^\perp) + \frac{1}{2}(\Omega^2(J^\top) + \Omega^2(J)^\top).
 		\end{split}
 	\end{equation}
 	At the same time, using that (\ref{eqn:tOmega-dot}-ii) remains valid when replacing $\dot{\gamma}$ with $J^\top$, we may evaluate \eqref{eqn:ROmega} as
 	\begin{equation}\label{eqn:RsOm}
 		(R^\Omega_s)_{\dot{\gamma}/s}(J^\perp) = -R(\dot{\gamma},J^\perp)\dot{\gamma} - (\nabla_{J^\perp}\Omega)(\dot{\gamma}) + \dot{\tOmega}(J^\perp)^\perp.
 	\end{equation}
 	As $R(\dot{\gamma},J)\dot{\gamma} = R(\dot{\gamma},J^\perp)\dot{\gamma}$ and $(\nabla_J\Omega)(\dot{\gamma}) = \dot{\Omega}(J^\top) + (\nabla_{J^\perp}\Omega)(\dot{\gamma})$, we may substitute \eqref{eqn:RsOm} into \eqref{eqn:D2J} and use \eqref{def:MOmega} to write
 	\begin{equation}\label{eqn:D2J-afterM}
 		\begin{split}
 			\tD^2J &= -(M^\Omega_s)_{\dot{\gamma}/s}(J^\perp)+  \dot{\tOmega}(J^\perp)^\perp+  \dot{\Omega}(J^\top)+\Omega(\dot{J}) \\ &\qquad\qquad -2\tOmega(\dot{J}) - \dot{\tOmega}(J) + \frac{1}{2}(\Omega^2(J^\top) + \Omega^2(J)^\top).
 		\end{split}
 	\end{equation}
 	Applying ${\rm P}^\perp$ to both sides of \eqref{eqn:D2J-afterM} and noting that $\Omega(\dot{J})^\perp - 2\tOmega(\dot{J})^\perp = 0$, by normality of $J$ and \hbox{(\ref{eqn:prop_tOmega}-iii)}, it follows that
 	\begin{equation}\label{eqn:D2J-afterM-2}
 		\tD^2J^\perp = -(M^\Omega_s)_{\dot{\gamma}/s}(J^\perp) +\dot{\tOmega}(J^\perp)^\perp + \dot{\Omega}(J^\top)  - \dot{\tOmega}(J)^\perp + \frac{1}{2}\Omega^2(J^\top)^\perp .
 	\end{equation}
 	Decomposing $\dot{\tOmega}(J)^\perp = \dot{\tOmega}(J^\top)^\perp + \dot{\tOmega}(J^\perp)^\perp$ and using \hbox{(\ref{eqn:tOmega-dot}-ii)} for a second time, a complete cancellation occurs and \eqref{eqn:D2J-afterM-2} reduces to \eqref{eqn:Jacobi}, as required.
 \end{proof}
 
 We conclude this section with one interesting consequence of Proposition \ref{propn:curvature-identity}.

 \begin{prop}\label{prop:negative_sec_no_conj}
   Let $(\g,\sigma)$ be a magnetic system on a (not necessarily closed) manifold $M$. If $s>0$ is such that ${\rm sec}^\Omega_s \leq 0$, then $\varPhi^{(\g,\sigma)}_s$ is without conjugate points.
 \end{prop}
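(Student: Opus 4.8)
The plan is to exploit the magnetic Jacobi equation in the form given by Proposition \ref{propn:curvature-identity}, namely $\tD^2 J^\perp + (M^\Omega_s)_{\dot\gamma/s}(J^\perp) = 0$, and argue by contradiction using a standard energy/index-type comparison. Suppose $\varPhi^{(\g,\sigma)}_s$ has a pair of conjugate points along some $(\g,\sigma)$-geodesic $\gamma$ with speed $s$: there is a nontrivial normal $(\g,\sigma)$-Jacobi field $J$ with $J(0)=0$ and $J(\tau)$ proportional to $\dot\gamma(\tau)$ for some $\tau > 0$. Since $J$ is normal, $J^\perp$ satisfies the same equation and $(J^\perp)(0)=0$; moreover $J^\perp(\tau) = J(\tau) - s^{-2}\langle J(\tau),\dot\gamma(\tau)\rangle\dot\gamma(\tau) = 0$ because $J(\tau)$ is parallel to $\dot\gamma(\tau)$. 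So $J^\perp$ is a solution of the second-order ODE \eqref{eqn:Jacobi} vanishing at both $t=0$ and $t=\tau$. I would first check that $J^\perp$ is not identically zero on $[0,\tau]$: if it were, then $J = J^\top$ would be a multiple of $\dot\gamma$ solving the $(\g,\sigma)$-Jacobi equation with $J(0)=0$; combined with $\langle \tD J/{\rm d}t, \dot\gamma\rangle = 0$ (normality) and metric compatibility along $\gamma$, this forces $J \equiv 0$, contradicting nontriviality. (This is exactly the place where the normality hypothesis in the definition of conjugate point is used.)

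Next I would set up the quadratic form. Because $\tD$ is metric compatible along $\gamma$ (property (ii) of Section \ref{sec:anisotropic}) and $\tOmega_{\dot\gamma/s}$ is skew-adjoint, for any vector fields $V,W$ along $\gamma$ orthogonal to $\dot\gamma$ we have $\frac{{\rm d}}{{\rm d}t}\langle V, W\rangle = \langle \tD V, W\rangle + \langle V, \tD W\rangle$. Pairing \eqref{eqn:Jacobi} with $J^\perp$ and integrating by parts over $[0,\tau]$, the boundary terms drop out thanks to $J^\perp(0) = J^\perp(\tau) = 0$, giving
\begin{equation*}
  \int_0^\tau \|\tD J^\perp\|^2 \,{\rm d}t = \int_0^\tau \langle (M^\Omega_s)_{\dot\gamma/s}(J^\perp), J^\perp\rangle \,{\rm d}t.
\end{equation*}
The right-hand side is $\int_0^\tau ({\rm sec}^\Omega_s)_{\dot\gamma(t)/s}(J^\perp(t))\,\|J^\perp(t)\|^2\,{\rm d}t$ up to the normalization of the argument to unit length (when $J^\perp(t)\neq 0$), which by the hypothesis ${\rm sec}^\Omega_s \leq 0$ is $\leq 0$. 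Hence $\int_0^\tau \|\tD J^\perp\|^2\,{\rm d}t \leq 0$, forcing $\tD J^\perp \equiv 0$ on $[0,\tau]$. Then $J^\perp$ is "$\tD$-parallel" with $J^\perp(0) = 0$; by metric compatibility $\|J^\perp\|$ is constant, so $J^\perp \equiv 0$ on $[0,\tau]$, contradicting the previous paragraph. This proves $\varPhi^{(\g,\sigma)}_s$ is without conjugate points.

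The only genuinely delicate point is bookkeeping around the vertex direction $\R\dot\gamma$: one must be careful that $(M^\Omega_s)_{\dot\gamma/s}$ is an endomorphism of $E$, i.e. of $\dot\gamma^\perp$, so the pairing $\langle (M^\Omega_s)_{\dot\gamma/s}(J^\perp), J^\perp\rangle$ genuinely reduces to ${\rm sec}^\Omega_s$ evaluated on (the normalization of) $J^\perp$, and that the $\tD$-parallel transport in the perpendicular bundle is well-defined and an isometry — both of which follow from the discussion in Section \ref{sec:anisotropic} and from property (iii) stating $B = (M^\Omega_s)_{\dot\gamma/s}$ with $A = 0$. I expect no further obstacle; the argument is the magnetic analogue of the classical index-form proof that nonpositive curvature implies no conjugate points.
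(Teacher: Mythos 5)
Your proof is correct and is essentially the same as the paper's: both hinge on Proposition \ref{propn:curvature-identity} and the metric compatibility of $\tD$, via the Leibniz identity $\tfrac{{\rm d}}{{\rm d}t}\langle \tD J^\perp, J^\perp\rangle = \|\tD J^\perp\|^2 + \langle\tD^2 J^\perp, J^\perp\rangle$. The paper uses this identity pointwise to show that $\|J^\perp\|^2$ is convex (hence cannot vanish at both endpoints of $[0,t_0]$ without vanishing throughout), whereas you integrate it over $[0,\tau]$ to obtain an index-form inequality forcing $\tD J^\perp \equiv 0$; these are two standard packagings of the same computation.
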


 \begin{proof}
   Let $v\in \Sigma_s$, and assume that $J$ is any normal $(\g,\sigma)$-Jacobi field along the geodesic $\gamma$ with initial velocity $v$. Using metric compatibility of $\tD$ together with \eqref{eqn:Jacobi}, we find that
   \begin{equation}
     \begin{split}
       \frac{{\rm d}^2}{{\rm d}t^2} \frac{\|J^\perp\|^2}{2} &= \|\tD J^\perp\|^2 + \langle \tD^2J^\perp, J^\perp\rangle \\ &=  \|\tD J^\perp\|^2 - \langle (M^\Omega_s)_{\dot{\gamma}/s}(J^\perp), J^\perp\rangle \\ &= \|\tD J^\perp\|^2 - ({\rm sec}^\Omega_s)_{\dot{\gamma}/s}\left(\frac{J^\perp}{\|J^\perp|}\right) \|J^\perp\|^2 \geq 0,
     \end{split}
   \end{equation}where the ${\rm sec}^\Omega_s$ term is understood to be zero at all instants on which $J^\perp$ vanishes. Hence, the function $t\mapsto \|J(t)^\perp\|$ is increasing whenever $J$ is normal. If, in addition, we suppose that $J(0) = 0$ and that $J(t_0)$ is proportional to $\dot{\gamma}(t_0)$ for some $t_0>0$, then it must be the case that $J^\perp|_{[0,t_0]} = 0$ identically, making $J|_{[0,t_0]}$ tangential. As a tangential and normal $(\g,\sigma)$-Jacobi field is necessarily a constant multiple of $\dot{\gamma}$, $J(0)=0$ implies that $J|_{[0,t_0]}=0$, and so $\gamma(t_0)$ is not conjugate to $\gamma(0)$ along $\gamma$.
 \end{proof}
 
\section{Magnetic Green bundles}\label{sec:mag_green_bundles}

In this section, we construct the so-called Green bundles associated with a magnetic flow $\varPhi^{(\g, \sigma)}_s$. As described in the Introduction, this is a useful tool for obtaining non-trivial normal $(\g, \sigma)$-Jacobi fields along any given $(\g,\sigma)$-geodesic whose perpendicular component never vanishes. The strategy is to introduce a symplectic vector bundle $Q$ over $\Sigma_s$, whose fibers are in correspondence with suitable spaces of normal $(\g, \sigma)$-Jacobi fields (see Lemma \ref{lem:quotient_correspondence}).  The Green bundles are then defined as limits of curves of Lagrangian subbundles of Q (see Lemma \ref{lem:lim_Sv_exist}), and its elements give rise to the desired fields. 



\subsection{General setup} \label{sec:setup}

Let $(\g,\sigma)$ be a magnetic system on a closed manifold $M$. On $TM$, we may consider the \emph{twisted symplectic form} $\omega^\sigma$ defined by
\begin{equation}\label{eqn:defn_twisted_symp}
    \omega^\sigma(\xi,\eta) = \langle \pi_\ast\xi, K(\eta)\rangle - \langle K(\xi), \pi_\ast\eta\rangle - \sigma(\pi_\ast\xi,\pi_\ast\eta),
\end{equation}
for all $\xi,\eta\in T(TM)$. Here, $\pi\colon TM\to M$ is the natural bundle projection, and $K\colon TTM\to TM$ is the \emph{Levi-Civita connector} of $(M,\g)$. 

It is well-known that 
\begin{equation}\label{eqn:mag-ham}
    \parbox{.73\textwidth}{the magnetic flow $\varPhi^{(\g,\sigma)} \colon \mathbb{R} \times TM\to TM$ is Hamiltonian,} 
\end{equation}
as its generator $X^{\g,\sigma} \in \mathfrak{X}(TM)$ satisfies $\omega^\sigma(X^{\g,\sigma}, \cdot) = {\rm d}L$, where $L\colon TM\to \R$ is given by $L(v) = \|v\|^2/2$. Indeed, it is a direct consequence of \eqref{eqn:defn_twisted_symp} together with 
\begin{equation}\label{eqn:KXsigma}
    {\rm (i)}~ {\rm d}L_v(\eta) = \langle K(\eta), v\rangle \quad\mbox{and}\quad {\rm (ii)}~ K(X^{\g,\sigma}_v) = \Omega(v).
\end{equation}
Clearly (\ref{eqn:KXsigma}-i) holds, while (\ref{eqn:KXsigma}-ii) follows from the easily verified general relation $X^{\g,\sigma}_v = X^\g_v + \Omega^{\sf v}(v)$, where $X^\g \in\mathfrak{X}(TM)$ denotes the geodesic vector field of $(M,\g)$ and $\Omega^{\sf v}(v) \in T_{v}(TM)$ is the vertical lift of $\Omega(v)$, characterized by $\pi_\ast (\Omega^{\sf v}(v)) = 0$ and $K(\Omega^{\sf v}(v)) = \Omega(v)$.

In the following, instead of $K$ we will use 
\begin{equation}\label{eqn:defn_Ksigma}
    \parbox{.63\textwidth}{the \emph{twisted connector} $K^\sigma\colon TTM \smallsetminus \{0\}\to TM$, defined by $K^\sigma(\xi) = K(\xi) - \tOmega(\pi_\ast\xi)$, for $\xi\in T_v(TM)$,}
\end{equation}
where $\tOmega$ is the anisotropic Lorentz force given by \eqref{eqn:tOmega-def}. Note that the zero section must be removed from $TTM$ in \eqref{eqn:defn_Ksigma} so that $\tOmega$ may be evaluated. In addition, observe that $K^\sigma(X^{\g,\sigma})= 0$ as a consequence of (\ref{eqn:KXsigma}-ii) and (\ref{eqn:prop_tOmega}-i).

\subsection{Reduced magnetic Green bundles over $s$-sphere bundles} \label{sec:reduced_magnetic_green_bundles}

Let $s>0$ and fix $\Sigma_s = \{v\in TM \mid \|v\|=s\}$. We denote by $\pi$ and $K^\sigma$ the restrictions to $\Sigma_s$ of the corresponding objects in $TM$, and by $\varPhi^{(\g, \sigma)}_s$ the magnetic flow restricted to $\Sigma_s$.

Combining (\ref{eqn:KXsigma}-i) with \eqref{eqn:defn_Ksigma} and (\ref{eqn:prop_tOmega}-i), we see that 
\begin{equation}\label{eqn:tan_space_shell}
    T_v\Sigma_s = \{\xi \in T_v(TM) \mid \langle K^\sigma(\xi)+\Omega(\pi_\ast\xi),v\rangle = 0\},
\end{equation}for every $v\in \Sigma_s$. Three distinguished subspaces of \eqref{eqn:tan_space_shell} are the \hbox{\emph{$(\g,\sigma)$-horizontal}} and \emph{vertical} spaces at $v$, defined by $H^\sigma_v = \ker K^\sigma|_{T_v\Sigma_s}$ and $V_{v} = \ker \pi_\ast|_{T_v\Sigma_s}$, respectively, and $\Theta_v = \{  \xi \in T_v(TM) \mid \pi_\ast \xi \in \R \Omega(v) \text{ and } K(\xi) = 0 \}$. They provide 
\begin{equation}\label{eqn:decomp_HV}
    \parbox{.65\textwidth}{a direct sum decomposition $T_v\Sigma_s =  H^\sigma_v\oplus \Theta_v \oplus V_v.$}
\end{equation}Indeed, it is straightforward to check that $H_{v}^\sigma \cap V_{v} = \{0\}$ by using that $K$ and $K^\sigma$ agree on vertical vectors, as well as $\Theta_v\cap H^\sigma_v = \Theta_v \cap V_v = \{0\}$ using \eqref{eqn:tan_space_shell}, while the restrictions  \begin{equation}\label{eqn:restr_K_pi}
    {\rm (i)}\,K^\sigma\colon V_v \to v^\perp\quad\mbox{and}\quad {\rm (ii)}\,
\pi_\ast \colon H^\sigma_v \oplus \Theta_v \to T_{\pi(v)}M\end{equation}are isomorphisms, so that $\dim T_v\Sigma_s = \dim (H^\sigma_v \oplus \Theta_v) + \dim V_v$. Motivated by \mbox{(\ref{eqn:restr_K_pi}-ii)}, let $\hat{H}^\sigma_v = H^\sigma_v \oplus \Theta_v$. Formula \eqref{eqn:defn_twisted_symp} can now be made simpler:

\begin{lem}\label{lem:simplified_Ksigma}
   With the above setup,
\begin{equation}\label{eqn:simplified_Ksigma} \omega^\sigma(\xi,\eta) = \langle [\pi_\ast\xi]^\perp, K^\sigma(\eta)\rangle - \langle K^\sigma(\xi),[\pi_\ast\eta]^\perp\rangle,  \end{equation}for all $\xi,\eta\in T\Sigma_s$. In particular, $\omega^\sigma$ restricted to $\Sigma_s$ is degenerate and has its kernel spanned by $X^{\g,\sigma}$.
\end{lem}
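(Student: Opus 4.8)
The plan is to exploit the simplified expression for $\omega^\sigma$ directly. The formula \eqref{eqn:simplified_Ksigma} should follow from the defining formula \eqref{eqn:defn_twisted_symp} by observing that the extra $-\sigma(\pi_\ast\xi,\pi_\ast\eta)$ term is absorbed by the replacement of $K$ with $K^\sigma$: indeed, \eqref{eqn:defn_Ksigma} gives $K^\sigma(\eta) = K(\eta) - \tOmega(\pi_\ast\eta)$, so
\[
\langle \pi_\ast\xi, K^\sigma(\eta)\rangle - \langle K^\sigma(\xi), \pi_\ast\eta\rangle = \langle \pi_\ast\xi, K(\eta)\rangle - \langle K(\xi), \pi_\ast\eta\rangle - \langle \pi_\ast\xi, \tOmega(\pi_\ast\eta)\rangle + \langle \tOmega(\pi_\ast\xi), \pi_\ast\eta\rangle.
\]
Comparing with \eqref{eqn:defn_twisted_symp}, the identity reduces to showing that $\langle \tOmega_v(a), b\rangle - \langle a, \tOmega_v(b)\rangle = -\sigma(a,b) = -\langle a, \Omega(b)\rangle$ whenever $a = \pi_\ast\xi$ and $b = \pi_\ast\eta$ with $\xi,\eta\in T_v\Sigma_s$. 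Since $\tOmega_v$ is skew-adjoint, the left-hand side equals $2\langle \tOmega_v(a), b\rangle$, so I must verify $2\langle \tOmega_v(a), b\rangle = -\langle a, \Omega(b)\rangle = \langle \Omega(a), b\rangle$, i.e. $2\tOmega_v$ and $\Omega$ agree after pairing against vectors of the form $\pi_\ast\eta$ for $\eta \in T_v\Sigma_s$. This is where the constraint \eqref{eqn:tan_space_shell} enters: it forces $b = \pi_\ast\eta$ to satisfy $\langle \Omega(b), v\rangle = -\langle K^\sigma(\eta), v\rangle$, but more to the point, $a = \pi_\ast\xi$ ranges over all of $T_{\pi(v)}M$ since $\pi_\ast$ is onto (by \eqref{eqn:restr_K_pi}-ii). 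Using the alternative formula \eqref{eqn:tOmega_alternative}, $2\tOmega_v(a) = \langle\Omega(a),v\rangle v + \langle a,v\rangle\Omega(v) + \Omega(a)$, the pairing $2\langle\tOmega_v(a),b\rangle$ picks up two correction terms beyond $\langle\Omega(a),b\rangle$, namely $\langle\Omega(a),v\rangle\langle v,b\rangle + \langle a,v\rangle\langle\Omega(v),b\rangle$; I need these to cancel using the tangency conditions on both $\xi$ and $\eta$.

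So the first key step is to translate \eqref{eqn:tan_space_shell} into a useful statement about $\pi_\ast$ and $K^\sigma$ and combine it with the skew-adjointness \eqref{eqn:prop_tOmega} to run the cancellation above. Concretely, for $\xi,\eta \in T_v\Sigma_s$ write $a = \pi_\ast\xi$, $b = \pi_\ast\eta$, $a' = K^\sigma(\xi)$, $b' = K^\sigma(\eta)$; then \eqref{eqn:tan_space_shell} reads $\langle a' + \Omega(a), v\rangle = 0$ and $\langle b' + \Omega(b), v\rangle = 0$. Expanding $\omega^\sigma(\xi,\eta)$ from \eqref{eqn:defn_twisted_symp} with $K(\xi) = a' + \tOmega_v(a)$, $K(\eta) = b' + \tOmega_v(b)$ and using $\langle \tOmega_v(w), v\rangle = \langle\Omega(w),v\rangle$ from (\ref{eqn:prop_tOmega}-ii) should produce, after the dust settles, exactly $\langle a, b'\rangle - \langle a', b\rangle$, which is the right-hand side of \eqref{eqn:simplified_Ksigma}. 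This is a short bilinear-algebra computation; I expect the skew-symmetry of the whole expression in $(\xi,\eta)$ to cut the work roughly in half.

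For the second assertion, that $\omega^\sigma$ restricted to $\Sigma_s$ is degenerate with kernel $\R X^{\g,\sigma}$, I would argue as follows. First, $X^{\g,\sigma}_v$ lies in the kernel: from \eqref{eqn:simplified_Ksigma}, $\omega^\sigma(X^{\g,\sigma}_v, \eta) = \langle \pi_\ast X^{\g,\sigma}_v, K^\sigma(\eta)\rangle - \langle K^\sigma(X^{\g,\sigma}_v), \pi_\ast\eta\rangle$; the second term vanishes because $K^\sigma(X^{\g,\sigma}) = 0$ (noted right after \eqref{eqn:defn_Ksigma}), and the first term vanishes because $\pi_\ast X^{\g,\sigma}_v = v$ while $K^\sigma(\eta) = K(\eta) - \tOmega_v(\pi_\ast\eta) \in v^\perp$ for $\eta \in T_v\Sigma_s$ — indeed by \eqref{eqn:tan_space_shell}, $\langle K^\sigma(\eta) + \Omega(\pi_\ast\eta), v\rangle = 0$, and by (\ref{eqn:prop_tOmega}-ii) $\langle \Omega(\pi_\ast\eta), v\rangle = \langle \tOmega_v(\pi_\ast\eta), v\rangle$; combined with $\langle K(\eta), v\rangle$... here I need to be slightly careful, but the upshot is that $K^\sigma$ maps $T_v\Sigma_s$ into $v^\perp$, which is precisely the content of (\ref{eqn:restr_K_pi}-i) together with $H^\sigma_v \subseteq \ker K^\sigma$. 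Conversely, to see the kernel is no larger, suppose $\xi \in T_v\Sigma_s$ satisfies $\omega^\sigma(\xi, \eta) = 0$ for all $\eta$. Using the decomposition $T_v\Sigma_s = H^\sigma_v \oplus V_v$ from \eqref{eqn:decomp_HV} and the isomorphisms \eqref{eqn:restr_K_pi}: testing against all $\eta \in V_v$ (for which $\pi_\ast\eta = 0$ and $K^\sigma(\eta)$ ranges over $v^\perp$) forces $\langle \pi_\ast\xi, w\rangle = 0$ for all $w \in v^\perp$, so $\pi_\ast\xi \in \R v$; testing against all $\eta \in H^\sigma_v$ (for which $K^\sigma(\eta) = 0$ and $\pi_\ast\eta$ ranges over $T_{\pi(v)}M$, in particular over $v^\perp$) forces $K^\sigma(\xi) \in \R v$, but $K^\sigma(\xi) \in v^\perp$, so $K^\sigma(\xi) = 0$, i.e. $\xi \in H^\sigma_v$ with $\pi_\ast\xi \in \R v$; since $\pi_\ast|_{H^\sigma_v}$ is injective and $\pi_\ast X^{\g,\sigma}_v = v$ with $X^{\g,\sigma}_v \in H^\sigma_v$ (as $K^\sigma(X^{\g,\sigma}) = 0$), we conclude $\xi \in \R X^{\g,\sigma}_v$.

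The main obstacle I anticipate is purely bookkeeping: keeping the projections $[\,\cdot\,]^\top$, $[\,\cdot\,]^\perp$ and the two connectors $K$, $K^\sigma$ straight while using the three characterizing properties in \eqref{eqn:prop_tOmega} at the right moments, and making sure the tangency condition \eqref{eqn:tan_space_shell} is invoked for both arguments $\xi$ and $\eta$ rather than just one. There is no conceptual difficulty — the statement is essentially a compatibility bookkeeping lemma showing that the definition of $K^\sigma$ was rigged precisely to absorb the $\sigma$-term in $\omega^\sigma$ while preserving the Lagrangian structure needed in Section \ref{sec:mag_green_bundles} — but the sign of the $\frac12$ in \eqref{eqn:tOmega-def} and the precise form of \eqref{eqn:tOmega_alternative} must be tracked carefully, since a wrong sign would break the cancellation.
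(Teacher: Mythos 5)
Your overall strategy is the same as the paper's: substitute $K = K^\sigma + \tOmega \circ \pi_\ast$ into \eqref{eqn:defn_twisted_symp}, use \eqref{eqn:tOmega_alternative}, skew-adjointness, and the tangency condition \eqref{eqn:tan_space_shell} to simplify, and for the kernel test against the subspaces $V_v$ and $H^\sigma_v$ via \eqref{eqn:decomp_HV} and \eqref{eqn:restr_K_pi}. The second (kernel) paragraph is complete and matches the paper's argument. The gap is in the first step.

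You claim that expanding $\omega^\sigma(\xi,\eta)$ and feeding in $\langle a'+\Omega(a),v\rangle = 0$, $\langle b'+\Omega(b),v\rangle = 0$ ``should produce, after the dust settles, exactly $\langle a,b'\rangle - \langle a',b\rangle$.'' It does not. If you carry the expansion through, the correction term $2\langle a,\tOmega_v(b)\rangle - \langle a,\Omega(b)\rangle$ equals $s^{-2}\bigl(\langle\Omega(b),v\rangle\langle a,v\rangle - \langle b,v\rangle\langle\Omega(a),v\rangle\bigr)$, and feeding in the tangency constraints turns this into $-\langle a,[b']^\top\rangle + \langle[a']^\top,b\rangle$. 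So after the dust settles you get
\begin{equation*}
\omega^\sigma(\xi,\eta) = \langle a,(b')^\perp\rangle - \langle(a')^\perp,b\rangle,
\end{equation*}
with the $[\,\cdot\,]^\perp$'s present. This is exactly the paper's intermediate equation, and it is \emph{not} yet \eqref{eqn:simplified_Ksigma}: the tangency constraints alone cannot remove the $[\,\cdot\,]^\perp$'s, since they tell you $\langle a',v\rangle = -\langle\Omega(a),v\rangle$, which is not zero in general. Your earlier remark that the problem ``reduces to showing $2\langle\tOmega_v(a),b\rangle = \langle\Omega(a),b\rangle$'' is symptomatic of the same issue, since that identity is false.

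What you are missing from the main computation is the observation that $K^\sigma$ restricted to $T_v\Sigma_s$ is $v^\perp$-valued (so $(a')^\perp=a'$, $(b')^\perp=b'$). This does not follow from \eqref{eqn:tan_space_shell}; it follows from the decomposition \eqref{eqn:decomp_HV} together with $K^\sigma(\eta)=K^\sigma(\eta^{\sf v})$ and (\ref{eqn:restr_K_pi}-i) being $v^\perp$-valued, which is the paper's closing step. You do in fact state this fact --- ``the upshot is that $K^\sigma$ maps $T_v\Sigma_s$ into $v^\perp$'' --- but only while proving the kernel assertion, not where it is actually needed, namely at the end of the first computation. Moving that observation to the right place closes the gap and makes your argument coincide with the paper's.
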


\begin{proof}
   For $\xi,\eta\in T_v\Sigma_s$, decompose $T_{\pi(v)}M = \R v \oplus v^\perp$, and denote by $[\,\cdot\,]^\top$ and $[\,\cdot\,]^\perp$ the orthogonal projections onto $\R v$ and $v^\perp$, respectively. Using \eqref{eqn:tOmega_alternative} and \eqref{eqn:Lorentz_force_def}, \eqref{eqn:defn_twisted_symp} becomes
    \begin{equation}
    \begin{split}
         \omega^\sigma(\xi,\eta) &= \langle \pi_\ast\xi, K^\sigma(\eta)\rangle - \langle K^\sigma(\xi), \pi_\ast\eta\rangle \\ &\quad +\langle \Omega(\pi_\ast\eta), [\pi_\ast\xi]^\top\rangle-\langle \Omega(\pi_\ast\xi), [\pi_\ast\eta]^\top\rangle.
    \end{split}
  \end{equation}As $\xi,\eta\in T_v\Sigma_s$, self-adjointness of $[\,\cdot\,]^\top$ and $[\,\cdot\,]^\perp$ together with \eqref{eqn:tan_space_shell} leads to \eqref{eqn:simplified_Ksigma}. It remains to establish the claim about the kernel of $\omega^\sigma$. If $\xi \in T_v\Sigma_s$ is such that $\omega^\sigma(\xi,\cdot) = 0$, then from the relation $\omega^\sigma(\xi , V_v) = 0$ and (\ref{eqn:restr_K_pi}-i) we directly obtain that $\pi_\ast\xi \in (v^\perp)^\perp = \R v$. Writing $\pi_\ast\xi = \lambda v$, for some $\lambda \in \R$, and noting from \mbox{(\ref{eqn:restr_K_pi}-ii)} that $H^\sigma_v$ surjects onto $\Omega(v)^\perp$, we see that $\omega^\sigma(\xi,H^\sigma_v) = 0$ implies that $K^\sigma(\xi) \in \R \Omega(v)$, since $K^\sigma(\xi)$ lies in the double orthogonal complement to $\R\Omega(v)$ \emph{relative to $v^\perp$}. We conclude with two cases: if $\Omega(v) = 0$, then $K(\xi) = 0$ and thus $\xi = \lambda X^\g_v = \lambda X^{\g,\sigma}_v$; if $\Omega(v) \neq 0$ instead, then $K(\xi) - \lambda\Omega(v) = \mu \Omega(v)$ for some $\mu \in \R$, and we use that $\omega^{\sigma}(\xi,\eta) = -\mu \|\Omega(v)\|^2$ whenever $\eta\in \Theta_v$ has $\pi_\ast\eta = \Omega(v)$, so that $\mu = 0$ and $\xi = \lambda X^{\g,\sigma}_v$ again.
\end{proof}

By modding out $\R X^{\g,\sigma}$ in \eqref{eqn:decomp_HV}, we obtain a symplectic vector bundle $Q$ over $\Sigma_s$, whose fiber over $v\in \Sigma_s$ is given by $Q_v = T_v\Sigma_s/\R X^{\g,\sigma}_v$ and whose symplectic structure is naturally induced by $\omega^\sigma$. As $X^{\g,\sigma}_v \not\in V_v$, we have that $V$ is projectable, and so we may also identify $Q_v$ with the direct sum $[\hat{H}^\sigma_v/\R X^{\g,\sigma}_v]\oplus V_v$. In addition, due to \eqref{eqn:mag-ham} and $X^{\g,\sigma}$ being \hbox{$\varPhi^{(\g,\sigma)}_s$-related} to itself, we see that
\begin{equation}\label{eqn:induces_symplec}
  \parbox{.85\textwidth}{for each $t\in \R$, the quotient flow $\varPsi^{\g,\sigma} \colon \mathbb{R} \times Q\to Q$ induced by the derivative $\,{\rm d}\varPhi^{(\g,\sigma)}_s\colon \mathbb{R}\times T\Sigma_s\to T\Sigma_s\,$ is a bundle symplectomorphism.}
\end{equation}

For notational simplicity, we write $\varPsi^{(\g, \sigma)}_t = \varPsi^{(\g, \sigma)}(t, \cdot)$. For every $v\in \Sigma_s$ and $t\in \R$, we define $E^\sigma_v(t) = (\varPsi^{(\g,\sigma)}_t)^{-1}[V_{\varPhi^{(\g,\sigma)}_s(t,v)}]$, so that $(\varPsi^{(\g,\sigma)}_t)_v[E^\sigma_v(t)] = V_{\varPhi^{(\g,\sigma)}_s(t,v)}$. Here, we identify each $V_v$ with its isomorphic image under the quotient projection $T\Sigma_s\to Q$. It follows from \eqref{eqn:simplified_Ksigma} that $V$ is a Lagrangian subbundle of $Q$, and thus, by \eqref{eqn:induces_symplec},
\begin{equation}\label{eqn:curve_of_Lagrangians}
    \parbox{.685\textwidth}{$\R\ni t\mapsto E^\sigma_v(t)$ is a curve of Lagrangian subspaces of $Q_v$.}
\end{equation}

To study the behavior of $E^\sigma_v(t)$ as $t\to \pm \infty$, it will be convenient to use a different characterization of $Q$. For each $v \in \Sigma_s$, we let $\mathcal{J}_0^{\g,\sigma, {\rm n}}(v)$ be the space of all normal $(\g,\sigma)$-Jacobi fields along $\gamma_v$ such that $J(0)^\top = 0$.

\begin{lem} \label{lem:quotient_correspondence}
The mapping
\begin{equation}\label{eqn:ident_Jac}
  Q_v \ni \xi + \R X^{\g,\sigma}_v \mapsto J_\xi \in \mathcal{J}_0^{\g,\sigma ,{\rm n}}(v),
\end{equation}where $J_\xi$ is the unique normal $(\g,\sigma)$-Jacobi field determined by the initial conditions $J_\xi(0) = [\pi_*\xi]^\perp$ and $(\tD J_\xi)(0) = K^\sigma(\xi)$, is a well-defined linear isomorphism.   
\end{lem}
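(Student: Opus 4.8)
The plan is to check three things in turn: that the map is well-defined (i.e. independent of the representative $\xi$ and landing in $\mathcal{J}_0^{\g,\sigma,{\rm n}}(v)$), that it is linear, and that it is bijective. For well-definedness, first note that changing the representative $\xi \mapsto \xi + \lambda X^{\g,\sigma}_v$ changes the initial data by $(\pi_*(\lambda X^{\g,\sigma}_v))^\perp = (\lambda v)^\perp = 0$ and $K^\sigma(\lambda X^{\g,\sigma}_v) = 0$ (by the remark after \eqref{eqn:defn_Ksigma}), so $J_\xi$ is unchanged. Next, the prescribed initial conditions $J_\xi(0) = [\pi_*\xi]^\perp$ and $(\tD J_\xi)(0) = K^\sigma(\xi)$ are genuinely admissible: by \eqref{eqn:Dtilde}, specifying $(\tD J_\xi)(0)$ is equivalent to specifying ${\rm D}J_\xi/{\rm d}t$ at $0$, so standard ODE theory for \eqref{eqn:magnetic_Jacobi_equation} produces a unique $(\g,\sigma)$-Jacobi field. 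To see it is normal in the sense of \eqref{eqn:normal}, recall that $\langle {\rm D}J/{\rm d}t,\dot\gamma\rangle$ has vanishing derivative along any $(\g,\sigma)$-Jacobi field (a one-line computation using \eqref{eqn:magnetic_Jacobi_equation} and skew-adjointness of $\Omega$), so it suffices to check it vanishes at $t=0$; there, using \eqref{eqn:Dtilde}, \eqref{eqn:prop_tOmega}-ii, and \eqref{eqn:defn_Ksigma}, one gets $\langle {\rm D}J_\xi/{\rm d}t,\dot\gamma\rangle(0) = \langle (\tD J_\xi)(0) + \tOmega_v(J_\xi(0)), v\rangle s$, and $\langle K^\sigma(\xi),v\rangle + \langle \tOmega_v([\pi_*\xi]^\perp), v\rangle = \langle K(\xi) - \tOmega_v(\pi_*\xi) + \tOmega_v([\pi_*\xi]^\perp), v\rangle = \langle K(\xi) - \tOmega_v([\pi_*\xi]^\top), v\rangle$, which vanishes by the description \eqref{eqn:tan_space_shell} of $T_v\Sigma_s$ together with \eqref{eqn:prop_tOmega}-ii (the $\top$-part contributes $\langle\Omega(\pi_*\xi)^\top,v\rangle$ appropriately). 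Finally $J_\xi(0)^\top = ([\pi_*\xi]^\perp)^\top = 0$, so indeed $J_\xi \in \mathcal{J}_0^{\g,\sigma,{\rm n}}(v)$.

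Linearity is immediate once well-definedness is settled, since $\xi \mapsto ([\pi_*\xi]^\perp, K^\sigma(\xi))$ is linear and the map from initial data to the solution of the linear ODE \eqref{eqn:magnetic_Jacobi_equation} is linear. For injectivity, suppose $J_\xi \equiv 0$; then $[\pi_*\xi]^\perp = 0$ and $K^\sigma(\xi) = 0$. The second condition says $\xi \in H^\sigma_v$, and on $H^\sigma_v$ the map $\pi_*$ is an isomorphism onto $T_{\pi(v)}M$ by \eqref{eqn:restr_K_pi}-ii; combined with $[\pi_*\xi]^\perp = 0$ this forces $\pi_*\xi \in \R v$, hence $\xi \in \R X^{\g,\sigma}_v$ (arguing as in the last paragraph of the proof of Lemma \ref{lem:simplified_Ksigma}), i.e. $\xi + \R X^{\g,\sigma}_v = 0$ in $Q_v$. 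For surjectivity, given $J \in \mathcal{J}_0^{\g,\sigma,{\rm n}}(v)$, we must produce $\xi \in T_v\Sigma_s$ with $[\pi_*\xi]^\perp = J(0)$ and $K^\sigma(\xi) = (\tD J)(0)$; here $J(0) = J(0)^\perp \in v^\perp$ since $J(0)^\top = 0$, and one checks $(\tD J)(0) \in v^\perp$ as well using normality and \eqref{eqn:prop_tOmega}-ii. Using the decomposition \eqref{eqn:decomp_HV}, define $\xi = \xi^{\sf h} + \xi^{\sf v}$ where $\xi^{\sf v} \in V_v$ is the preimage of $(\tD J)(0) \in v^\perp$ under the isomorphism \eqref{eqn:restr_K_pi}-i, and $\xi^{\sf h} \in H^\sigma_v$ is the preimage of $J(0) \in v^\perp \subseteq T_{\pi(v)}M$ under \eqref{eqn:restr_K_pi}-ii; then $\pi_*\xi = \pi_*\xi^{\sf h} = J(0)$, so $[\pi_*\xi]^\perp = J(0)$, and $K^\sigma(\xi) = K^\sigma(\xi^{\sf v}) = (\tD J)(0)$. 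It remains to see $\xi \in T_v\Sigma_s$, which follows from \eqref{eqn:tan_space_shell}: $\langle K^\sigma(\xi) + \Omega(\pi_*\xi), v\rangle = \langle (\tD J)(0), v\rangle + \langle \Omega(J(0)), v\rangle$, and both terms lie in computations already carried out above showing this equals $\langle {\rm D}J/{\rm d}t,\dot\gamma\rangle(0)/s = 0$ by normality. Then $J_\xi = J$ by uniqueness of solutions, since both have the same initial data.

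The only genuinely delicate point is the bookkeeping with the $\top$/$\perp$ projections and the precise relation between $K^\sigma$, $\tOmega_v$, and the defining equation \eqref{eqn:tan_space_shell} of $T_v\Sigma_s$ — i.e. making sure that ``$\xi \in T_v\Sigma_s$'' is exactly equivalent to ``$(\tD J_\xi)(0) \in v^\perp$'', which is in turn exactly the normality of $J_\xi$ at $t = 0$. I expect this to reduce to a single short identity relating $\langle K^\sigma(\xi) + \Omega(\pi_*\xi), v\rangle$ to $s\langle {\rm D}J_\xi/{\rm d}t, \dot\gamma\rangle(0)$, after which everything else is a formal consequence of linear ODE theory and the isomorphisms \eqref{eqn:restr_K_pi}. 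No deeper obstacle is anticipated; this is essentially the magnetic analogue of the classical identification of $T_v SM$ (mod the geodesic vector field) with Jacobi fields vanishing-in-the-tangential-direction, transported through the twisted connector $K^\sigma$ and the operator $\tD$.
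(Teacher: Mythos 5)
Your proof is correct and essentially parallels the paper's, with two places where you are more explicit than necessary. First, for bijectivity the paper simply proves injectivity and then invokes the dimension count $\dim Q_v = \dim \mathcal{J}_0^{\g,\sigma,{\rm n}}(v) = 2(n-1)$ to conclude; you instead construct preimages explicitly using the decomposition $T\Sigma_s = H^\sigma \oplus V$ from \eqref{eqn:decomp_HV} and the isomorphisms \eqref{eqn:restr_K_pi}. Both are valid; the dimension count is shorter, whereas your explicit construction has the small advantage of not requiring an independent check that $\dim Q_v$ and $\dim\mathcal{J}_0^{\g,\sigma,{\rm n}}(v)$ agree. Second, for normality the paper only asserts that $J_\xi$ is ``automatically normal in view of $\xi \in T_v\Sigma_s$ combined with \eqref{eqn:tan_space_shell}''; you make the underlying mechanism explicit, namely that $\langle {\rm D}J/{\rm d}t, \dot\gamma\rangle$ is constant along any $(\g,\sigma)$-Jacobi field (via \eqref{eqn:magnetic_Jacobi_equation} and skew-adjointness of $\Omega$ and $\nabla_J\Omega$), reducing normality to the value at $t=0$, which you then verify reduces to $\langle K(\xi),v\rangle=0$, i.e.\ membership in $T_v\Sigma_s$. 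That fills in a step the paper leaves implicit. The only cosmetic imprecision is your writing $\tOmega_v$ where the footpoint should be the unit vector $v/s$, but this is consistent with the paper's own notational ``Warning'' in Section \ref{sec:anisotropic} and does not affect the argument.
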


\begin{proof}	
It is easy to see that if one replaces $\xi$ with $\xi + \lambda X^{\g,\sigma}_v$ for some $\lambda \in \R$, then such initial conditions remain unchanged, while $J_\xi$ is automatically normal in view of $\xi \in T_v\Sigma_s$ combined with \eqref{eqn:tan_space_shell}. Finally, \eqref{eqn:ident_Jac} is injective for reasons explained in the proof of Lemma \ref{lem:simplified_Ksigma}: namely, if $[\pi_\ast\xi]^\perp = 0$, so that $\pi_\ast\xi=\lambda v$ for some $\lambda \in \R$, then $K^\sigma(\xi) = 0$ implies that $\xi = \lambda X^{\g,\sigma}_v$, and thus $\xi+\R X^{\g,\sigma}_v$ is zero in $Q_v$. As $\dim Q_v = \dim \mathcal{J}_0^{\g,\sigma ,{\rm n}}(v) = 2(n-1)$, 
it follows that \eqref{eqn:ident_Jac} is an isomorphism.
  
\end{proof}

As the next result shows, the condition of $\varPhi^{(\g,\sigma)}_s$ being without conjugate points is what will allows us to proceed.

\begin{prop} \label{prop:intersection}
  Let $\gamma$ be the $(\g,\sigma)$-geodesic with initial velocity $v \in \Sigma_s$. For each $\tau \in \R$, $E^\sigma_v(\tau) \cap V_v \neq \{0\}$ if and only if $\gamma(\tau)$ is conjugate to $\gamma(0) = \pi(v)$ along $\gamma$.
\end{prop}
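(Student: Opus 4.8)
The plan is to unwind the definition of the conjugate point condition and match it against the definition of $E^\sigma_v(\tau)$ via the isomorphism of Lemma \ref{lem:quotient_correspondence}. First I would recall that $\gamma(\tau)$ is conjugate to $\gamma(0)$ along $\gamma$ precisely when there is a nontrivial normal $(\g,\sigma)$-Jacobi field $J$ with $J(0) = 0$ and $J(\tau)$ proportional to $\dot\gamma(\tau)$; since such a $J$ is automatically normal and $J(0) = 0$ forces $J(0)^\top = 0$, this $J$ lies in $\mathcal{J}_0^{\g,\sigma,{\rm n}}(v)$, and moreover the condition ``$J(\tau)$ proportional to $\dot\gamma(\tau)$'' is exactly the statement $J(\tau)^\perp = 0$, i.e. $J(\tau) = J(\tau)^\top$.

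The key observation is then that, under \eqref{eqn:ident_Jac}, the fiber $V_v$ corresponds to the subspace of $\mathcal{J}_0^{\g,\sigma,{\rm n}}(v)$ consisting of fields $J_\xi$ with $[\pi_*\xi]^\perp = J_\xi(0) = 0$, that is, the normal Jacobi fields vanishing at $t=0$; and, translating through the flow, $E^\sigma_v(\tau) = (\varPsi^{(\g,\sigma)}_\tau)^{-1}[V_{\varPhi^{(\g,\sigma)}_s(\tau,v)}]$ corresponds under \eqref{eqn:ident_Jac} to the normal Jacobi fields $J$ whose value at time $\tau$ lies in the ``vertical'' subspace at $\gamma(\tau)$ — concretely, $J(\tau)^\perp = 0$. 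This uses that the quotient flow $\varPsi^{(\g,\sigma)}_\tau$ intertwines the identification of $Q_v$ with $\mathcal{J}_0^{\g,\sigma,{\rm n}}(v)$ at the base point $v$ and the corresponding identification at $\varPhi^{(\g,\sigma)}_s(\tau,v)$ — which is just the statement that a normal Jacobi field along $\gamma$, translated by the linearized flow, is still a normal Jacobi field along $\gamma$ but ``based'' at time $\tau$. So I would make precise the compatibility $(\varPsi^{(\g,\sigma)}_\tau)_v(\xi + \R X^{\g,\sigma}_v)$ corresponds to the Jacobi field $t \mapsto J_\xi(t+\tau)$, and hence $\xi \in E^\sigma_v(\tau)$ iff $J_\xi(\tau) = [\pi_*(\mathrm{d}\varPhi^{(\g,\sigma)}_s)_v\xi]^\perp = 0$.

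Putting these together: $E^\sigma_v(\tau) \cap V_v \neq \{0\}$ means there is a nonzero $\xi + \R X^{\g,\sigma}_v \in Q_v$ with both $J_\xi(0) = 0$ (from $\xi$ corresponding to $V_v$) and $J_\xi(\tau)^\perp = 0$ (from $\xi$ corresponding to $E^\sigma_v(\tau)$); by the isomorphism this $J_\xi$ is a nontrivial normal $(\g,\sigma)$-Jacobi field with $J_\xi(0) = 0$ and $J_\xi(\tau)$ proportional to $\dot\gamma(\tau)$, i.e. $\gamma(\tau)$ is conjugate to $\gamma(0)$. Conversely, any such conjugate-point witness produces via \eqref{eqn:ident_Jac} a nonzero element of $E^\sigma_v(\tau)\cap V_v$. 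The main obstacle — really the only thing requiring care — is the precise bookkeeping of the time-translation intertwining relation between $\varPsi^{(\g,\sigma)}_\tau$ and the correspondence \eqref{eqn:ident_Jac} at the two different base points, and making sure ``$J(\tau)$ proportional to $\dot\gamma(\tau)$'' is correctly read off as membership in the translated vertical subspace; the rest is a direct matching of definitions.
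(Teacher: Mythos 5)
Your proposal is correct and follows essentially the same route as the paper: identify $V_v$ with normal Jacobi fields in $\mathcal{J}_0^{\g,\sigma,{\rm n}}(v)$ vanishing at $t=0$, identify $E^\sigma_v(\tau)$ with those satisfying $J(\tau)^\perp=0$ by time-translating through the quotient flow $\varPsi^{\g,\sigma}_\tau$, and intersect. The only point to tidy up is the intertwining formula you leave implicit: the quotient flow carries $J$ to $t\mapsto J(t+\tau) - s^{-2}\langle J(\tau),\dot\gamma(\tau)\rangle\,\dot\gamma(t+\tau)$ (not just $t\mapsto J(t+\tau)$), and it is precisely this tangential correction that makes the translated field land in $\mathcal{J}_0^{\g,\sigma,{\rm n}}(\varPhi^{(\g,\sigma)}_s(\tau,v))$ and evaluates at $0$ to $J(\tau)^\perp$; relatedly, in your final display the condition should read $J_\xi(\tau)^\perp = 0$ rather than $J_\xi(\tau) = 0$.
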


\begin{proof}
  Note that, under the identification \eqref{eqn:ident_Jac}, we may express the vertical space as $V_v \cong \{ J \in \mathcal{J}^{\g,\sigma,{\rm n}}_0(v) \mid J(0)=0 \}$. The quotient flow introduced in \eqref{eqn:induces_symplec}, in turn, appears as the mapping $\varPsi^{\g,\sigma}_\tau\colon \mathcal{J}_0^{\g,\sigma,{\rm n}}(v) \to \mathcal{J}_0^{\g,\sigma,{\rm n}}(\varPhi^{(\g,\sigma)}_s(\tau,v))$ given by \begin{equation}\label{eqn:quot_flow_J} [\varPsi^{\g,\sigma}_\tau(J)](t) = J(t+\tau) - s^{-2}\langle J(\tau),\dot{\gamma}(\tau)\rangle \dot{\gamma}(t+\tau).\end{equation}
  Indeed, that the derivative of the magnetic flow pushes $J$ to $t\mapsto J(t+\tau)$ is obtained exactly as in \cite[Lemma 1.40]{Paternain99_book}, while the term subtracted in \eqref{eqn:quot_flow_J} amounts to the natural projection of a normal $(\g,\sigma)$-Jacobi field along $\gamma$ onto $\mathcal{J}_0^{\g,\sigma,{\rm n}}(v)$. As $[\varPsi^{\g,\sigma}_\tau(J)](0) = J(\tau)^\perp$, we see that $
    E^\sigma_v(\tau) \cong \{J \in \mathcal{J}^{\g,\sigma,{\rm n}}_0(v) \mid J(\tau)^\perp =0\}$
under \eqref{eqn:ident_Jac}, so that 
\begin{equation} \label{eqn:E-sigma-tau}
E^\sigma_v(\tau) \cap V_v \cong \{J \in \mathcal{J}^{\g,\sigma,{\rm n}}_0(v) \mid J(0) = J(\tau)^\perp =0\}.
\end{equation} 
The conclusion follows.
\end{proof}


From here on, we assume that $\varPhi^{(\g,\sigma)}_s$ is without conjugate points. Observe the isomorphism \hbox{$\hat{H}^\sigma_v/\R X^{\g,\sigma}_v \cong v^\perp$} induced by (\ref{eqn:restr_K_pi}-ii), in view of $\pi_\ast (X^{\g,\sigma}_v) = v$ and $T_{\pi(v)}M/\R v \cong v^\perp$. With this isomorphism and (\ref{eqn:restr_K_pi}-i) in mind, we have
\begin{equation}\label{eqn:E-graph-S}
  \parbox{.87\textwidth}{each $E^\sigma_v(t)$ corresponds to the graph of a linear operator $S_v(t)\colon v^\perp \to v^\perp$}
\end{equation}
which, by \eqref{eqn:simplified_Ksigma} and \eqref{eqn:curve_of_Lagrangians}, must be self-adjoint.
Now, recall the \emph{Loewner order} on the space of self-adjoint operators on $v^\perp$: we write $A<B$ if $B-A$ is positive-definite. For this partial order, it is well-known that
\begin{equation}\label{mct}
  \parbox{.78\textwidth}{the \emph{monotone convergence theorem} holds: a bounded and mo\-no\-to\-ne sequence (or curve) of self-adjoint operators must converge,}
\end{equation}
cf. \cite[3.6.5, p. 108]{Bobrowski}.

\begin{lem}\label{lem:lim_Sv_exist}
For every $v \in \Sigma_s$, the limits
  \begin{equation}
  S_v^+ = \lim_{t\to +\infty} S_v(t)\quad\mbox{and}\quad S_v^- = \lim_{t\to -\infty} S_v(t)  
  \end{equation}
   exist, and their graphs define $\varPsi^{(\g, \sigma)}_t$-invariant Lagrangian subbundles \hbox{$E^{\sigma,+}$ and $E^{\sigma,-}$ of $Q$}.
\end{lem}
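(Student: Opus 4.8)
The plan is to mimic Green's classical argument, using the magnetic Riccati equation and the monotone convergence theorem \eqref{mct}. First I would write down the differential equation satisfied by the family $S_v(t)$: since each $E^\sigma_v(t)$ is, by construction, the pushforward under $(\varPsi^{(\g,\sigma)}_t)^{-1}$ of the vertical space at the point $\varPhi^{(\g,\sigma)}_s(t,v)$, and since the curve $E^\sigma_v(t)$ is built from normal $(\g,\sigma)$-Jacobi fields via the identification \eqref{eqn:ident_Jac}, differentiating the graph condition and invoking Proposition \ref{propn:curvature-identity} shows that $S_v(t)$ solves a magnetic Riccati equation of the form $\dot{S} + S^2 + (M^\Omega_s)_{\dot\gamma(t)/s} = 0$ along $\gamma$, with the appropriate singularity at $t=0$ corresponding to the initial condition "$J(0)=0$". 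The key structural facts are then: (a) absence of conjugate points guarantees, via Proposition \ref{prop:intersection}, that $E^\sigma_v(t)\cap V_v = \{0\}$ for all $t\neq 0$, so $S_v(t)$ is well-defined (finite) for all $t\neq 0$; and (b) for fixed $v$, the family $t\mapsto S_v(t)$ is monotone in the Loewner order — this is the magnetic analogue of the fact that, for Jacobi fields vanishing at a point, "later" solution spaces dominate "earlier" ones.

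The monotonicity in (b) is the heart of the matter and where I expect the main obstacle. In the Riemannian case it follows from a comparison of two solutions of the Riccati equation with a sign on $S_1 - S_2$ preserved by the flow; here the presence of the anisotropic connection $\tD$ and the lower-order term $A^\Omega$ in $M^\Omega_s$ means one must track the algebra carefully to confirm that the same comparison principle goes through. Concretely, given $t_1 < t_2$ (both of the same sign relative to $0$), I would relate $S_v(t_1)$ and $S_v(t_2)$ by comparing the two subspaces $E^\sigma_v(t_1)$ and $E^\sigma_v(t_2)$ inside $Q_v$ and using that both are limits of — or correspond to — spaces of Jacobi fields vanishing at $\gamma(0)$; the Lagrangian property from \eqref{eqn:curve_of_Lagrangians} together with the transversality from (a) forces the difference $S_v(t_2) - S_v(t_1)$ to have a definite sign, and one reads off from the Riccati equation that $\frac{d}{dt}$ of this difference cannot change that sign. (Equivalently, one can argue directly that $\langle S_v(t)w, w\rangle$ for fixed $w\in v^\perp$ is monotone by writing it in terms of $\|J^\perp\|$ of an appropriate family of fields and using metric compatibility of $\tD$ as in the proof of Proposition \ref{prop:negative_sec_no_conj}.)

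Next I would establish uniform bounds: for a fixed $v$, the operators $S_v(t)$ for $t$ in a neighborhood of $+\infty$ are trapped between $S_v(1)$ (or any fixed reference time) and a bound coming from the fact that they cannot blow up — conjugate-point-freeness prevents an escape to $\pm\infty$ in finite time, and the monotone family stays on one side of a fixed operator. Combining boundedness with monotonicity, \eqref{mct} gives the existence of the limits $S_v^+ = \lim_{t\to+\infty} S_v(t)$ and $S_v^- = \lim_{t\to-\infty} S_v(t)$, defining subspaces $E^{\sigma,+}_v$ and $E^{\sigma,-}_v$ of $Q_v$. Finally I would verify the two claimed properties of the resulting bundles. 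Invariance under $\varPsi^{(\g,\sigma)}_t$ is the cocycle identity $(\varPsi^{(\g,\sigma)}_\tau)_v[E^\sigma_v(t)] = E^\sigma_{\varPhi^{(\g,\sigma)}_s(\tau,v)}(t-\tau)$, which passes to the limit $t\to\pm\infty$; being Lagrangian passes to the limit because the Lagrangian Grassmannian is closed and each $E^\sigma_v(t)$ is Lagrangian by \eqref{eqn:curve_of_Lagrangians}. The only point requiring a word of care is that $S_v^\pm$ depends continuously (measurably suffices for the applications, but continuity holds) on $v$, so that $E^{\sigma,\pm}$ is genuinely a subbundle; this follows from continuous dependence of solutions of the Riccati equation on parameters together with the uniform estimates above.
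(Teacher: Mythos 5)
Your high-level skeleton is right — monotonicity plus boundedness via conjugate-point-freeness, then \eqref{mct}, then invariance and the Lagrangian property passing to the limit — and the final steps (invariance via the cocycle identity, Lagrangian via closedness of symmetry) are fine. But the core monotonicity argument contains a real gap, and it starts from a mistaken premise.

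The premise that $t \mapsto S_v(t)$ itself solves a magnetic Riccati equation $\dot S + S^2 + M^\Omega_s = 0$ is not correct. The Riccati equation is satisfied by the operator you get by \emph{pushing one fixed Lagrangian plane along the flow} (that is, by the $U_v(\cdot)$ introduced in Section \ref{sec:proof_thm_a}); the family $S_v(t)$ is instead a one-parameter family of \emph{distinct} Lagrangian subspaces of the single symplectic space $Q_v$, corresponding to distinct choices of terminal vanishing time. So "one reads off from the Riccati equation that the sign of $S_v(t_2)-S_v(t_1)$ cannot change" is not available — there is no single ODE governing the difference. Your other suggested route, writing $\langle S_v(t)w,w\rangle$ in terms of $\|J^\perp\|$ and using metric compatibility as in Proposition \ref{prop:negative_sec_no_conj}, also doesn't close the gap: the Jacobi field in question depends on $t$, so this does not produce a monotone scalar function of $t$ by itself.

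What the paper does instead, following Green/Iturriaga, is a two-step argument. First it shows that $S_v(t)-S_v(s)$ has \emph{constant signature} on the regions $0<s<t$ and $s<0<t$: this is because a nontrivial kernel element $w$ of the difference would give a normal $(\g,\sigma)$-Jacobi field $J$ with $J(0)=w$, $J(t)^\perp=J(s)^\perp=0$, which via the quotient-flow formula \eqref{eqn:quot_flow_J} manufactures a conjugate pair and contradicts the hypothesis. (This is essentially your "transversality forces a definite sign", and it is the correct mechanism.) Second — and this is the step you omit entirely — it \emph{determines} the sign with a local computation: parallel-transporting and Taylor-expanding the vanishing condition $P_{-t}(J(t)^\perp)=0$ gives $S_v(t) = -t^{-1}\mathrm{Id}_{v^\perp} + \mathrm{O}(t)$, so $S_v(t)-S_v(s)$ is positive-definite for all small $0<s<t$, hence (by constant signature) on the whole region, giving monotone increase. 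Boundedness is then concrete too: the constant-signature statement on the region $s<0<t$ yields, for example, $S_v(t) < S_v(-1)$ for all $t>0$. Without the Taylor-expansion sign determination and the cross-zero comparison you are left with monotonicity "up to sign" and only a heuristic boundedness claim, so \eqref{mct} cannot yet be invoked. Finally, a small point: the paper establishes that $\varPsi^{(\g,\sigma)}_\tau(E^{\sigma,+}_v)\subseteq E^{\sigma,+}_{\varphi^\tau(v)}$ by pushing the defining Jacobi fields forward and then reverses $\tau$ for the opposite inclusion, which matches your cocycle argument; it does not address continuity of $v\mapsto S_v^\pm$ (and for Green bundles one generally only gets measurability, not continuity), so that extra claim in your last sentence should be dropped or weakened.
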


\begin{proof}
The arguments for $S_v^+$ and $S_v^-$ are identical, so it suffices to show that $S_v^+$ exists and its graph defines an invariant Lagrangian subbundle. We first show that the limit exists, following the argument in \cite{Iturriaga92}. Assume for simplicity that $s=1$, so $\Sigma_s=SM$. We claim that
  \begin{equation}\label{eqn:bounded_monotone}
    \parbox{.69\textwidth}{the difference $S_v(t,s) = S_v(t)-S_v(s)$ has constant signature in the open regions (i) $0<s<t$ and (ii) $s<0<t$.}
  \end{equation}
  To verify \hbox{(\ref{eqn:bounded_monotone}-i)}, assume that there exists $w\in \ker S_v(t_0,s_0) \smallsetminus \{0\}$, for suitable $0<s_0<t_0$. Then, the field $J \in \mathcal{J}^{\g,\sigma,{\rm n}}_0(v)$ with initial conditions $J(0) = w$ and $(\tD J)(0) = S_v(t_0)w$ necessarily has $J(t_0)^\perp = J(s_0)^\perp = 0$, due to \eqref{eqn:E-sigma-tau} and \eqref{eqn:E-graph-S}. It follows from \eqref{eqn:quot_flow_J} that $\Psi^{(\g, \sigma)}_{s_0}(J) \in \mathcal{J}^{\g,\sigma,\rm{n}}_0(\varPhi^{(\g,\sigma)}_1(s_0,v))$ satisfies $\Psi^{(\g, \sigma)}_{s_0}(J)(0) = 0$ and 
  $[\Psi^{(\g, \sigma)}_{s_0}(J)(t_0-s_0)]^\perp = 0$, contradicting our assumption that $\varPhi^{(\g,\sigma)}_1$ is without conjugate points. The same argument also establishes \hbox{(\ref{eqn:bounded_monotone}-ii)}.

  In view of (\ref{eqn:bounded_monotone}-i), to show that $t\mapsto S_v(t)$ is increasing for $t>0$, it suffices to exhibit $0<s_0<t_0$ such that $S_v(t_0,s_0)$ is positive-definite. This is done with a local argument: for every $t>0$, let $P_{-t}\colon \dot{\gamma}(t)^\perp \to v^\perp$ be the $\tD$-parallel transport along $\gamma_v$ and, for every $w\in v^\perp$ let $J \in E^\sigma_v(t)$ have initial conditions $J(0)=w$ and $(\tD J)(0)=S_v(t)w$ --- here, we use \eqref{eqn:ident_Jac}. Now the Taylor expansion
  \begin{equation}
    \begin{split}
    0 &= P_{-t}(J(t)^\perp) = J(0)^\perp + t (\tD J^\perp)(0) + {\rm O}(t^2) \\ &= w + t S_v(t)w + {\rm O}(t^2) = ({\rm Id}_{v^\perp} + tS_v(t) + {\rm O}(t^2))w     
    \end{split}
  \end{equation}leads to $S_v(t) = -t^{-1} {\rm Id}_{v^\perp} + {\rm O}(t)$, meaning that $S_v(t,s)$ is positive-definite for $0<s<t$ sufficiently small.

  Finally, it follows from (\ref{eqn:bounded_monotone}-ii) that $t\mapsto S_v(t)$ is bounded for $t>0$ as, for instance, $S_v(t)  < S_v(-1)$. The existence of $\lim_{t\to +\infty} S_v(t)$ is now a consequence of \eqref{mct}.
  
  To prove invariance of the bundle, fix $\tau \neq 0$ and $v \in SM$. Observe that if $J \in \mathcal{J}_0^{\g, \sigma, \rm{n}}(v)$ has initial conditions $J(0) = w$ and $(\tD J)(0) = S_v(t)w$, then $\varPsi^{(\g, \sigma)}_\tau(J) \in \mathcal{J}_0^{\g, \sigma, \rm{n}}(\varPhi^{(\g, \sigma)}_1(\tau, v))$ is a new Jacobi field satisfying $\varPsi^{(\g, \sigma)}_\tau(J)(0) = J(\tau)^\perp$ and $[\varPsi^{(\g, \sigma)}_\tau(J)(t-\tau)]^\perp = 0$. This shows  \hbox{$
  	(\tD \varPsi^{(\g, \sigma)}_\tau(J))(0) = S_{\varPhi^{\g, \sigma}_1(\tau, v)}(t-\tau)J(\tau)^\perp.$} We let $t$ tend to $+ \infty$ to get that $\varPsi^{(\g, \sigma)}_\tau(E^{\sigma, +}_v) \subseteq E^{\sigma, +}_{\varPhi^{(\g, \sigma)}_1(\tau, v)}$. The choice of $v$ and $\tau$ was arbitrary, so the same argument also shows that  $\varPsi^{(\g, \sigma)}_{-\tau}\big(E^{\sigma, +}_{\varPhi^{\g, \sigma}_1(\tau, v)}\big) \subseteq E^{\sigma, +}_v$, and we conclude invariance. Finally, we observe that the bundles are Lagrangian by Lemma \ref{lem:simplified_Ksigma}, since a limit of symmetric matrices is symmetric.
  
\end{proof}

\section{Proof of Theorem \ref{teo:magnetic-Hopf}} \label{sec:proof_thm_a}

Let $(\g,\sigma)$ be a magnetic system on a closed manifold $M$, and $s>0$ be such that $\varPhi^{(\g,\sigma)}_s \colon \R \times \Sigma_s\to\Sigma_s$ is without conjugate points. We use the notation and setup from Section \ref{sec:mag_green_bundles}. For every $v\in \Sigma_s$ and $t\in \R$, we define an operator $Y_v(t)\colon v^\perp \to v^\perp$ by $Y_v(t)w = P_{-t} (J_w(t)^\perp)$, where $P_{-t}\colon \dot{\gamma}(t)^\perp \to v^\perp$ is the $\tD$-parallel transport operator along $\gamma_v$, and $J_w \in \mathcal{J}^{\g,\sigma,\rm{n}}_0(v)$ has the initial conditions $J_w(0) = w$ and $(\tD J_w)(0)= S^+_vw$, for $S^+_v$ given as in Lemma \ref{lem:lim_Sv_exist}. We first claim that
\begin{equation}\label{eqn:Psi-cocycle}
  P_{-\tau} \circ Y_{\varPhi^{(g,\sigma)}_s(\tau, v)}(t) \circ P_\tau \circ Y_v(\tau) = Y_v(t+\tau)
\end{equation}holds for all $t,\tau \in \R$. Indeed, if we denote by $L_v(t)\colon v^\perp \to \dot{\gamma}(t)^\perp$ the linear mapping taking $w$ to $J_w(t)^\perp$, the cocycle relation $L_{\varPhi^{(\g,\sigma)}_s(\tau,v)}(t)\circ L_v(\tau) = L_v(t+\tau)$ obviously holds; substituting $L_v(t) = P_t\circ Y_v(t)$ in it yields \eqref{eqn:Psi-cocycle}.

By construction of $S_v^+$, we have that $J_w$ is nowhere vanishing whenever \hbox{$w \neq 0$}, and thus each $Y_v(t)$ is invertible. This allows us to consider the self-adjoint operator $U_v(t) = \dot{Y}_v(t) \circ Y_v(t)^{-1}$ on $v^\perp$. Using Proposition \ref{propn:curvature-identity}, we have that
\begin{equation} \label{eqn:Jacobi2}
 \ddot{Y}_v(t)w  = P_{-t} (\tD^2(J_w)(t)^\perp) = -P_{-t} \left( (M^\Omega_s)_{\varPhi^{(\g,\sigma)}_s(t,v)/s} (J_w(t)^\perp)\right)
\end{equation}
or, equivalently, $\ddot{Y}_v(t) = -P_{-t} \circ (M^\Omega_s)_{\varPhi^{(\g,\sigma)}_s(t,v)/s}\circ L_v(t)$. Thus,
\begin{equation}\label{eqn:Riccati-t}
  \begin{split}
   \dot{U}_v(t)  &= \ddot{Y}_v(t) Y_v(t)^{-1} - \dot{Y}_v(t)  Y_v(t)^{-1} \dot{Y}_v(t) Y_v(t)^{-1}  \\ &=  - P_{-t}\circ  (M^\Omega_s)_{\varPhi^{(\g,\sigma)}_s(t,v)/s} \circ P_t - U_v(t)^2.
  \end{split}
\end{equation}
Taking the trace of \eqref{eqn:Riccati-t}, it follows that
\begin{equation}\label{eqn:before-t-0}
  {\rm tr}\big(\dot{U}_v(t)\big) + {\rm tr}\big(U_v(t)^2\big) + {\rm Ric}^\Omega_s(\varPhi^{(\g,\sigma)}_s(t,v)/s) = 0.
\end{equation}Setting $t=0$ and integrating both sides of \eqref{eqn:before-t-0} with respect to the Liouville measure ${\rm d}\mu_{\g,s} = {\rm d}\nu_{\g}\otimes {\rm d}\mathfrak{m}_s$ induced in $\Sigma_s$, where ${\rm d}\nu_{\g}$ is the Riemannian measure of $(M,\g)$ and ${\rm d}\mathfrak{m}_s$ is the Lebesgue measure on a round sphere of radius $s$, we see that
\begin{equation}\label{eqn:almost-there}
  \int_{\Sigma_s} \left({\rm tr}(\dot{U}_v(0)) + {\rm tr}(U_v(0)^2) \right)\,{\rm d}\mu_{\g,s}(v) + s^{n-1} \int_{SM}  {\rm Ric}^\Omega_s(v)\,{\rm d}\mu_{\g, 1}(v) = 0.
\end{equation}
For the integral of ${\rm Ric}^\Omega_s$ above, note that ${\rm d}\mathfrak{m}_s(v) = s^{n-1}{\rm d}\mathfrak{m}(w)$ for $w=v/s$ and apply Fubini's theorem.

Letting $F\colon \Sigma_s\to \R$ be given by $F(v) = {\rm tr}(U_v(0))$, we claim that ${\rm tr}(\dot{U}_v(0)) = X^{\g,\sigma}(F)(v)$. Indeed, in view of the relation
\begin{equation}\label{eqn:conjugation-type}
 U_{\varPhi^{(\g,\sigma)}_s(\tau,v)}(t) = P_\tau\circ U_v(t+\tau) \circ P_{-\tau}, 
\end{equation}
which is a direct consequence of \eqref{eqn:Psi-cocycle}, we have
\begin{equation}
    X^{\g,\sigma}(F)(v) = \frac{{\rm d}}{{\rm d}t}\bigg|_{t=0} {\rm tr}\,U_{\varPhi^{(\g,\sigma)}_s(t,v)}(0) = \frac{{\rm d}}{{\rm d}t}\bigg|_{t=0} {\rm tr}\,U_v(t) = {\rm tr}\,\dot{U}_v(0).
  \end{equation}
  As the magnetic flow is volume-preserving \cite[Proposition 1.5]{Wojtkowski_2000}, it follows that 
  the integral of ${\rm tr}(\dot{U}_v(0))$ over $\Sigma_s$ vanishes,
  and so \eqref{eqn:almost-there} leads us to
  \begin{equation}\label{eqn:inequality_Hopf}
    \int_{SM}{\rm Ric}^\Omega_s(v)\,{\rm d}\mu_{\g, 1}(v) =  -s^{1-n}\int_{\Sigma_s} {\rm tr}(U_v(0)^2)\,{\rm d}\mu_{\g,s}(v) \leq 0,
  \end{equation}concluding the first part of the argument.

 Assuming now that the equality holds in \eqref{eqn:inequality_Hopf}, measurability and non-ne\-ga\-ti\-vi\-ty of the function $\Sigma_s\ni v\mapsto {\rm tr}(U_v(0)^2)$ tells us that ${\rm tr}(U_v(0)^2) = 0$ for almost every $v\in \Sigma_s$. As $U_v(0)$ is self-adjoint, it must then be the case that $U_v(0) = 0$ for almost every $v\in \Sigma_s$. Using that the flow is volume preserving along with \eqref{eqn:conjugation-type}, we see that for every $t \in \mathbb{R}$ the set $B_t = \{v \in \Sigma_s \ | \ U_v(t) = 0\}$ has full $\mu_{\g, s}$-measure. Hence, $B = B_0 \cap \left( \bigcap_{n \geq 1} B_{1/n} \right)$ also has full measure, so for almost every $v \in \Sigma_s$ we have $\dot{U}_v(0) =\lim_{n \rightarrow \infty} n[U_v(1/n) - U_v(0)] = 0.$
Substituting this into \eqref{eqn:Riccati-t} with $t=0$, we obtain that $M^\Omega_s = 0$ as required.
 
\section{The meaning of magnetic flatness and proof of Theorem \ref{thm:mag_flat}}\label{sec:mag_flatness}

Recall that a K\"{a}hler manifold may be defined as a Riemannian manifold $(M, \g)$ equipped with an almost-complex structure $J$ such that $\g(J\cdot,J\cdot) = \g$ and ${\nabla J = 0}$, with $\nabla$ denoting the Levi-Civita connection of $(M,\g)$. This latter condition then implies that $J$ is integrable. Restricting the sectional curvature function of $(M,\g)$ to $J$-invariant planes, one has the well-known notion of \emph{holomorphic sectional curvature}, which may be regarded as the smooth function ${\rm sec}^{\rm hol}\colon SM\to \R$ given by ${\rm sec}^{\rm hol}(v) = {\rm sec}(v,Jv)$. Note that ${\rm sec}^{\rm hol}$ also equals the Gaussian curvature of $(M,\g)$ when $\dim M=2$.

Given any $\lambda \in (0,\infty)$, we may consider the magnetic system $(\g, \lambda\omega)$ on $M$, where $\omega = \g(J\cdot,\cdot)$ is the K\"{a}hler form of $(M,\g)$. Systems of this form are called \emph{K\"{a}hler magnetic systems}, and they have been extensively studied by Adachi \cite{Adachi_1995, Adachi_1997, Adachi_2012, Adachi_2014, Adachi_2019}.

Whenever the holomorphic sectional curvature of $(M,\g)$ is constant and equal to $k$, the Riemann curvature tensor is given by
\begin{equation}\label{eqn:Riemann_Kahler_H}
  R(X,Y)Z = \frac{k}{4}(\langle Y, Z\rangle X - \langle X,Z\rangle Y - \langle Y, JZ\rangle JX + \langle X, JZ\rangle JY + 2\langle X,JY\rangle JZ),
\end{equation}for all vector fields $X$, $Y$, and $Z$ on $M$ (cf. \cite[Proposition 7.3, p. 167]{KN2}). In this case, a straightforward computation shows that
\begin{equation}\label{eqn:magsec_Kahler}
  ({\rm sec}^\Omega_s)_v(w) = \frac{s^2k+\lambda^2}{4} (1+3\langle v,Jw\rangle^2),
\end{equation}for all $s>0$ and all pairs of orthonormal vectors $v,w$ tangent to $M$. In the case where $\dim M=2$, \eqref{eqn:magsec_Kahler} reads ${\rm sec}_s^\Omega\equiv s^2k+\lambda^2$. 

Our Theorem \ref{thm:mag_flat} serves as a rough ``converse'' of the above discussion.

\begin{proof}[Proof of Theorem \ref{thm:mag_flat}]
  Combining \eqref{eqn:AOmega} with the easily-verified relation \begin{equation}({\rm sec}^\Omega_s)_v(w) = s^2 {\rm sec}(v,w) - s \langle (\nabla_w\Omega)(v),w\rangle + \big\langle (A^\Omega)_v(w),w\big\rangle,\end{equation} the constant ${\rm sec}^\Omega_s$ condition can be written as
  \begin{equation}\label{eqn:mag_flatness}
    s^2{\rm sec}(v,w) - s\langle (\nabla_w\Omega)(v),w\rangle +\frac{3}{4}\langle v,\Omega(w)\rangle^2 + \frac{1}{4} \|\Omega(w)\|^2  = c,
  \end{equation}for all pairs of orthonormal vectors $v,w\in TM$. We first claim that
  \begin{equation}\label{eqn:fake_nearly_kahler}
    \parbox{.43\textwidth}{$(\nabla_w\Omega)(w) = 0$, for every $w\in TM$.}
  \end{equation}To establish \eqref{eqn:fake_nearly_kahler}, we may assume without loss of generality that $\|w\|=1$. Replacing $v\mapsto -v$ in \eqref{eqn:mag_flatness} and invoking skew-adjointness of $\nabla_w\Omega$ immediately yields $\langle (\nabla_w\Omega)(w),v\rangle = 0$, for all unit vectors $v$ orthogonal to $w$. This forces $(\nabla_w\Omega)(w)$ to be proportional to $w$, while at the same time $(\nabla_w\Omega)(w)$ is orthogonal to $w$; \eqref{eqn:fake_nearly_kahler} thus holds.

  Polarizing \eqref{eqn:fake_nearly_kahler}, it follows that $\nabla\Omega$ is skew-symmetric, and as a consequence $\nabla\sigma$ is alternating. However, the exterior derivative of a differential form equals the alternator of its covariant differential, up to a nonzero dimensional constant depending on conventions. Therefore, $\nabla\sigma = 0$ follows from ${\rm d}\sigma = 0$.

  As a nontrivial parallel tensor field on any connected manifold equipped with an affine connection is necessarily nowhere-vanishing, we now claim that
  \begin{equation}\label{eqn:Omega-conformal}
    \parbox{.538\textwidth}{$\Omega$ is a conformal endomorphism of $TM$, and thus $\,\|\Omega\|^{-1}\Omega\,$ is a complex structure on $\,M$.}
  \end{equation}
  Indeed, replacing $(v,w) \mapsto (w,v)$ in \eqref{eqn:mag_flatness} gives us that $\|\Omega(v)\|=\|\Omega(w)\|$ for all pairs of orthonormal vectors $v,w$ tangent to $M$, and such condition immediately implies that $\Omega$ is conformal, with constant conformal factor $\|\Omega\|>0$. Namely, we have that $\langle \Omega(v),\Omega(w)\rangle = \|\Omega\|^2\langle v,w\rangle$ for all vectors $v,w$ tangent to $M$. As $\Omega$ is skew-adjoint, nondegeneracy of $\g$ now implies that $\Omega^2 = -\|\Omega\|^2{\rm Id}$, yielding \eqref{eqn:Omega-conformal}.

  Since $\Omega$ is parallel, we have that $J=\|\Omega\|^{-1}\Omega$ is parallel and preserves $\g$, so 
  \eqref{eqn:Omega-conformal} makes $(M,\g)$ a K\"{a}hler manifold. Let ${\rm sec}^{\rm hol}$ be its holomorphic sectional curvature. Setting $w= Jv$ in \eqref{eqn:mag_flatness} leads to $s^2{\rm sec}^{\rm hol}(v) + \|\Omega\|^2 = c$ --- yielding \eqref{thm:mag_flat3} --- and so \eqref{eqn:magsec_Kahler} with $\lambda = -\|\Omega\|$ implies that
  \begin{equation}\label{eqn:kill_c}
    \frac{c}{4}(1+3\langle v,Jw\rangle^2) = c,
  \end{equation}for all pairs of orthonormal vectors $v,w$ tangent to $M$. Now, it follows from \eqref{eqn:kill_c} that $c=0$ whenever $\dim M>2$. Indeed, if $c\neq 0$, then $1+3\langle v,Jw\rangle^2=4$ leads to $|\langle v,Jw\rangle|=1$, and the equality case in the Cauchy-Schwarz inequality implies that
  \begin{equation}\label{eqn:CS_equality}
    \parbox{.57\textwidth}{the set $\{v,Jw\}$ is linearly dependent, for all pairs of orthonormal vectors $v,w$ tangent to $M$.}
  \end{equation}If $\dim M\geq 3$, \eqref{eqn:CS_equality} easily leads to a contradiction by fixing $w$ and choosing two mutually orthogonal unit vectors $v,v'$, both orthogonal to $w$.
 
\end{proof}


\appendix

\section{Negative magnetic curvature implies Anosov} \label{appendixAnosov}

The goal of this appendix is to prove the following:
\begin{prop} \label{lem:anosov}
	If $\sec^\Omega_s < 0$, then $\varPhi_s^{(\g, \sigma)}$ is Anosov.
\end{prop}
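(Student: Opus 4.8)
The plan is to exhibit an explicit hyperbolic splitting of $T\Sigma_s$ using the curvature information encoded in Proposition \ref{propn:curvature-identity} together with a Riccati comparison argument, closely following the strategy of Wojtkowski \cite{Wojtkowski_2000} but phrased in terms of the twisted connection $\tD$. First I would observe that since $\sec^\Omega_s < 0$ on the compact manifold $SM$, there is a constant $k > 0$ with $(\sec^\Omega_s)_v(w) \leq -k^2$ for all $v \in SM$ and unit $w \in v^\perp$; equivalently, in the notation of Section \ref{sec:proof_thm_a}, the operator $P_{-t}\circ (M^\Omega_s)_{\varPhi^{(\g,\sigma)}_s(t,v)/s}\circ P_t$ appearing in the magnetic Riccati equation \eqref{eqn:Riccati-t} satisfies $-P_{-t}\circ (M^\Omega_s)_{\cdot}\circ P_t \geq k^2\,{\rm Id}$ in the Loewner order. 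Since Proposition \ref{prop:negative_sec_no_conj} guarantees that $\varPhi^{(\g,\sigma)}_s$ is without conjugate points, all of the machinery of Section \ref{sec:mag_green_bundles} applies: the Green subbundles $E^{\sigma,\pm}$ exist and are $\varPsi^{(\g,\sigma)}_t$-invariant Lagrangian subbundles of $Q$, corresponding to self-adjoint solutions $S^\pm_v$ of the Riccati equation.

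The key step is to show that $E^{\sigma,+}$ and $E^{\sigma,-}$ are transversal and that the flow contracts along $E^{\sigma,+}$ in forward time and along $E^{\sigma,-}$ in backward time. For transversality, I would use the Riccati equation to control $S^+_v - S^-_v$: from \eqref{eqn:Riccati-t}, the difference $W_v(t) = S^+_v(t) - S^-_v(t)$ (after conjugating to a fixed fiber by parallel transport) satisfies $\dot{W} = -W\cdot(\text{something}) - (\text{something})\cdot W$, forcing $W_v(0)$ to be nonnegative, and the strict curvature bound together with the argument that $S^+$ is a decreasing limit while $S^-$ is an increasing limit (as in Lemma \ref{lem:lim_Sv_exist}) yields $S^+_v < S^-_v$ strictly, hence $E^{\sigma,+}_v \cap E^{\sigma,-}_v = \{0\}$ and $Q_v = E^{\sigma,+}_v \oplus E^{\sigma,-}_v$. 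For the contraction estimate, given $w$ in the $v^\perp$-graph model of $E^{\sigma,+}_v$, the associated normal Jacobi field $J_w$ satisfies, via metric compatibility of $\tD$ and \eqref{eqn:Jacobi},
\begin{equation}
\frac{{\rm d}^2}{{\rm d}t^2}\frac{\|J_w^\perp\|^2}{2} = \|\tD J_w^\perp\|^2 - ({\rm sec}^\Omega_s)_{\dot\gamma/s}\!\left(\frac{J_w^\perp}{\|J_w^\perp\|}\right)\|J_w^\perp\|^2 \geq \|\tD J_w^\perp\|^2 + k^2\|J_w^\perp\|^2,
\end{equation}
and a standard Riccati/ODE comparison shows that $\|J_w^\perp(t)\|$ decays like $e^{-kt}$ as $t \to +\infty$ (this is where the defining property of the Green bundle $E^{\sigma,+}$ — that $J_w$ is, up to a normalization, the "smallest" such field — is used). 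Lifting this back through the identification $Q_v \cong v^\perp \oplus v^\perp$ and recalling that $\|{\rm d}\varPhi^{(\g,\sigma)}_s|_{E^{\sigma,+}}\|$ is comparable to $\sqrt{\|J^\perp\|^2 + \|\tD J^\perp\|^2}$ gives the uniform exponential contraction constant, and symmetrically for $E^{\sigma,-}$ in backward time.

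Finally I would assemble these into the Anosov condition on $\Sigma_s$ itself: the splitting $T\Sigma_s = \mathbb{R}X^{\g,\sigma} \oplus \widetilde{E}^{\sigma,+} \oplus \widetilde{E}^{\sigma,-}$, where $\widetilde{E}^{\sigma,\pm}$ are the preimages of $E^{\sigma,\pm}$ under $T\Sigma_s \to Q$ chosen complementary to the flow direction, is continuous (the Green bundles are limits of continuous families, hence measurable; continuity follows from the no-conjugate-points condition plus compactness, as in \cite{Eberlein}), flow-invariant, and satisfies the required exponential estimates. The main obstacle I anticipate is twofold: first, carefully relating the contraction rate of the abstract symplectic cocycle $\varPsi^{(\g,\sigma)}_t$ on $Q$ to the genuine differential ${\rm d}\varPhi^{(\g,\sigma)}_s$ on $T\Sigma_s$ — one must account for the flow direction and the fact that $Q$ is a quotient — and second, upgrading the pointwise decay $e^{-kt}$ of individual Jacobi fields to a \emph{uniform} bound on the operator norm of the restricted differential, which requires that the constant in the $O(t)$ terms from the local analysis in Lemma \ref{lem:lim_Sv_exist} be controlled uniformly over $\Sigma_s$; compactness of $M$ makes this possible but the bookkeeping is the delicate part.
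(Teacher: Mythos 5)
Your proposal follows essentially the same route as the paper's appendix proof: reduce to the quotient bundle $Q$ with the twisted Sasaki metric, use the magnetic Green bundles $E^{\sigma,\pm}$ as the candidate stable/unstable subbundles, prove transversality, and extract uniform exponential rates from Riccati comparison. Two places where your sketch is weaker than the actual argument are worth flagging. First, the convexity estimate $\tfrac{{\rm d}^2}{{\rm d}t^2}\tfrac{\|J^\perp\|^2}{2} \geq \|\tD J^\perp\|^2 + k^2\|J^\perp\|^2$ alone does not yield the decay $\|J^\perp(t)\|\lesssim e^{-kt}$; what one actually needs is the first-order Riccati bound $\langle U_v^+(t)w,w\rangle \leq -a\|w\|^2$ coming from the upper curvature bound $A_v(t)\leq -a^2\,{\rm Id}$ (the paper invokes Gouda's Lemmas 4.4--4.5 and Corollary 4.6 for exactly this), together with the companion bound $\|U_v^\pm(t)\|\leq \tilde{a}$ from the lower curvature bound to control the $\tD J^\perp$ contribution to the Sasaki norm. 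Your appeal to ``the defining property of the Green bundle'' papers over this step. Second, the issue you identify as ``the delicate part'' --- relating exponential contraction of the symplectic cocycle $\varPsi^{(\g,\sigma)}_t$ on $Q$ to hyperbolicity of ${\rm d}\varPhi^{(\g,\sigma)}_s$ on $T\Sigma_s$ --- is not a bookkeeping nuisance but is a clean equivalence, already proved as \cite[Proposition 5.1]{Wojtkowski_2000}, which the paper simply cites. Your transversality argument (strict ordering $S^+_v < S^-_v$ via Riccati) is a valid alternative to the paper's cleaner observation that any $J\in E^{\sigma,+}_v\cap E^{\sigma,-}_v$ would have to both decay like $e^{-at}$ and grow like $e^{at}$, forcing $J=0$.
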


Without loss of generality, assume $s = 1$ and write $\varphi^t(v) = \Phi^{(\g, \sigma)}_1(t,v)$. Recall that $\varphi^t$ is \emph{Anosov} if there exists a $\varphi^t$-invariant splitting $T SM = E^+ \oplus E^0 \oplus E^-$ and constants $a,b > 0$ so that 
\begin{enumerate}[(1)]
	\item $E^0_v$ is spanned by $X^{\g, \sigma}_v$ for all $v \in SM$, and
	\item $\|{\rm d} (\varphi^{\pm t})_v(\xi)\| \leq b {\rm e}^{-ta} \|\xi\|$ for all $v \in SM$, $\xi \in E^{\pm}_v$, and $t \geq 0$, .
\end{enumerate} 
Since $M$ is compact, note that the choice of metric on $SM$ does not matter. We will consider $SM$ equipped with the \emph{twisted Sasaki metric}, so that the norm of $\xi \in T_vSM$ is given by 
\begin{equation}\label{eqn:twisted_sasaki}
	\|\xi\|^2 = \|\pi_*(\xi)\|^2 + \|K^\sigma(\xi)\|^2.
\end{equation}
Let $Q$ be the quotient bundle equipped with the quotient norm coming from \eqref{eqn:twisted_sasaki}, and let $\varPsi^{(\g, \sigma)}_t$ be the quotient flow introduced in \eqref{eqn:induces_symplec}.  We recall \cite[Proposition 5.1]{Wojtkowski_2000}, which allows us to work on the level of the quotient bundle instead of $TSM$:
\begin{lem} \label{lem:wojtkowski}
	The flow $\varphi^t$ is Anosov if and only if there is a $\varPsi^{(\g, \sigma)}_t$-invariant splitting $Q = E^+ \oplus E^-$ and two positive constants $a,b$ so that we have $\|\varPsi^{(\g,\sigma)}_{\pm t}(\xi)\| \leq b {\rm e}^{-ta} \|\xi\|$ for all $v \in SM$, $\xi \in E^{\pm}_v$, and $t \geq 0$.
\end{lem}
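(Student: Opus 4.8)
The plan is to transfer the hyperbolicity between $TSM$ and the quotient bundle $Q = TSM / \R X^{\g,\sigma}$ in both directions. First I would observe that the flow $\varphi^t$ always admits the trivial invariant line field $E^0$ spanned by the generator $X^{\g,\sigma}$, since $X^{\g,\sigma}$ is $\varphi^t$-related to itself; because $M$ is closed, $\|\d\varphi^t_v(X^{\g,\sigma}_v)\| = \|X^{\g,\sigma}_{\varphi^t(v)}\|$ is bounded above and below, so this direction is neither contracting nor expanding and must be the neutral bundle of any Anosov splitting. Thus the only content of the lemma is the identification of the stable/unstable bundles with their images in $Q$.

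The forward direction ($\varphi^t$ Anosov $\Rightarrow$ splitting on $Q$): given $TSM = E^+ \oplus E^0 \oplus E^-$, let $p\colon TSM \to Q$ be the quotient projection and set $\widehat E^\pm = p(E^\pm)$. Since $E^0 = \ker p$, the map $p$ restricts to a bundle isomorphism $E^+ \oplus E^- \to Q$, so $Q = \widehat E^+ \oplus \widehat E^-$ is a genuine splitting, and it is $\varPsi^{(\g,\sigma)}_t$-invariant because $\varPsi^{(\g,\sigma)}_t$ is the flow induced on the quotient by $\d\varphi^t$. For the contraction estimate, I would use that the quotient norm on $Q_v$ satisfies $\|p(\xi)\| \le \|\xi\|$ for all $\xi$, and conversely — restricted to the complement $E^+_v \oplus E^-_v$ of the line $\R X^{\g,\sigma}_v$ — that there is a uniform constant $C>0$ with $\|\xi\| \le C\|p(\xi)\|$ for $\xi \in E^+_v \oplus E^-_v$; such a $C$ exists by compactness of $SM$ together with the fact that the angle between $E^+\oplus E^-$ and $X^{\g,\sigma}$ is bounded away from zero (again by compactness and continuity of the Anosov splitting). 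Combining $\|\varPsi^{(\g,\sigma)}_{\pm t}(p(\xi))\| = \|p(\d\varphi^{\pm t}(\xi))\| \le \|\d\varphi^{\pm t}(\xi)\| \le b\mathrm{e}^{-ta}\|\xi\| \le bC\,\mathrm{e}^{-ta}\|p(\xi)\|$ for $\xi \in E^\pm_v$ gives the estimate on $Q$ with constants $a$ and $bC$.

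The reverse direction ($Q$ splitting $\Rightarrow \varphi^t$ Anosov): given $\varPsi^{(\g,\sigma)}_t$-invariant $Q = \widehat E^+ \oplus \widehat E^-$ with exponential contraction, I would define $E^0 = \R X^{\g,\sigma}$ and let $E^\pm$ be the preimages $E^\pm_v = (p|_{?})^{-1}(\widehat E^\pm_v)$; more precisely, since $p$ has kernel $E^0$, each $\widehat E^\pm_v$ pulls back to a subspace of $T_vSM$ containing $E^0_v$, and to get a genuine complement one must split off the neutral line. The clean way is to note $\d\varphi^t$ preserves $E^0$, hence descends, and the $\d\varphi^t$-invariant distributions $E^\pm := p^{-1}(\widehat E^\pm) \cap (\text{some invariant complement of } E^0)$ — or, more robustly, simply take $\widehat E^\pm$, lift each to a (non-canonical but continuous) complementary subbundle and then correct using invariance — yield $TSM = E^+ \oplus E^0 \oplus E^-$. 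The contraction estimates transfer back using the same two-sided norm comparison between $\|\cdot\|$ on the complement of $X^{\g,\sigma}$ and the quotient norm, exactly as above but with the inequalities reversed. The main obstacle I anticipate is the bookkeeping in this reverse direction: lifting a splitting from the quotient to an honest $\d\varphi^t$-invariant splitting of $TSM$ transverse to $X^{\g,\sigma}$ requires a short argument that the lifted distributions can be chosen invariant (one either cites that the stable/unstable bundles are intrinsically defined by the exponential dichotomy on $TSM/E^0$ and lift canonically, or runs the standard graph-transform/cone argument), and one must verify the angle-boundedness needed for the uniform norm comparison. Everything else is routine, and this is precisely \cite[Proposition 5.1]{Wojtkowski_2000}, so I would keep the exposition brief and refer there for the details I do not spell out.
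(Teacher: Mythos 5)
Your proposal is correct and takes essentially the same approach as the paper, which simply cites \cite[Proposition 5.1]{Wojtkowski_2000} for this lemma without giving a proof. The sketch you add --- projecting the Anosov splitting to $Q$ with a two-sided uniform norm comparison from compactness for the forward direction, and lifting the quotient dichotomy back to $TSM$ (via variation of constants or a graph transform to correct the $E^0$-component) for the reverse --- is a sound outline of what that citation contains.
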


Next, we consider two associated matrix differential equations and recall some standard comparison theory. Since $\sec^\Omega_1 < 0$, we have that the operators $S^\pm_v$ are defined by Proposition \ref{prop:negative_sec_no_conj} and Lemma \ref{lem:lim_Sv_exist}. Consider the maps $Y_v^\pm(t) : v^\perp \rightarrow v^\perp$ given by $Y_v^\pm(t)w = P_{-t}(J_w^\pm(t)^\perp)$, where $J_w^\pm \in \mathcal{J}^{\g, \sigma, n}_0(v)$ has initial conditions $J_w^\pm(0) = w$ and $(\tD J_w^\pm)(0) = S_v^\pm w$.
Defining \hbox{$A_v(t) = P_{-t} \circ (M^\Omega_1)_{\varphi^t(v)} \circ P_t$} and using \eqref{eqn:Jacobi2}, we have that the Jacobi equation \hbox{$\ddot{Y}_v^{\pm}(t) + A_v(t) \circ Y_v^{\pm}(t) = 0$} holds on \hbox{$v^\perp \cong \mathbb{R}^{n-1}$}. Similarly, defining $U_v^\pm(t) = \dot{Y}_v^\pm(t) \circ (Y_v^\pm(t))^{-1}$ and using \eqref{eqn:Riccati-t}, we have that the Riccati equation \hbox{$\dot{U}_v^\pm(t) + (U_v^\pm(t))^2 + A_v(t) = 0$} holds on \hbox{$v^\perp \cong \mathbb{R}^{n-1}$}.

Under the assumption that $\sec^\Omega_1 < 0$, we can find two positive constants $\tilde{a}, a$ so that $- \tilde{a}^2 \text{Id}_{v^\perp} \leq A_v(t) \leq -a^2 \text{Id}_{v^\perp}$ for all $v \in SM$ and $t \in \mathbb{R}$, where here we use the Loewner order described in Section \ref{sec:mag_green_bundles}. These constants allow us to apply Riccati comparison results in order to get bounds on $U_v^\pm(t)$. The lower bound $-\tilde{a}^2 \text{Id}_{v^\perp} \leq A_v(t)$ gives $|\langle U_v^\pm(t)w, w \rangle| \leq \tilde{a}\|w\|^2$ for all $v \in SM$, $w \in v^\perp$, and $t \in \mathbb{R}$ \cite[Lemma 4.4]{Gouda_1997}. Moreover, since $U_v^\pm(t)$ is a symmetric matrix, 
we deduce
\begin{equation} \label{eqn:ric_comp}
	\|U_v^\pm(t)w\| \leq \tilde{a} \|w\|
\end{equation}
for all $v \in SM$, $w \in v^\perp$, and $t \in \mathbb{R}$.

Similarly, the upper bound $A_v(t) \leq -a^2 \text{Id}_{v^\perp}$ gives  
\begin{equation} \label{eqn:ric_comp2}
	\langle U_v^+(t)w, w \rangle \leq - a  \quad \text{and} \quad \langle U_v^-(t)w, w \rangle \geq a.
\end{equation}
for all $v \in SM$, $w \in v^\perp$ a unit vector, and $t \in \mathbb{R}$ \cite[Lemma 4.5]{Gouda_1997}. In particular, 
\begin{equation} \label{eqn:exp}
	\begin{split}
		\|Y_v^+(t)w\| \leq {\rm e}^{-at}\|w\| & \text{ and }  \|Y_v^-(t)w\| \geq {\rm e}^{at}\|w\|, 
	\end{split}
\end{equation}
for all $v \in SM$, $w \in v^\perp$, and $t \geq 0$ \cite[Corollary 4.6]{Gouda_1997}. Note that applying the Cauchy-Schwarz inequality to \eqref{eqn:ric_comp2} yields \hbox{$a \leq  \langle U_v^-(t)w, w \rangle \leq \|U_v^-(t)w\|$}, which we can rewrite as
\begin{equation} \label{eqn:ric_comp3}
	a \|w\| \leq \|U_v^-(t)w\| 
\end{equation}
for all $v \in SM$, $w \in v^\perp$, and $t \in \mathbb{R}$.

Finally, it will be useful to understand how the quotient norm interacts with Jacobi fields. For $v \in SM$, let $\xi \in Q_v$ and let $J(t) \in \mathcal{J}^{\g, \sigma, n}_0(v)$ be the corresponding Jacobi field given by Lemma \ref{lem:quotient_correspondence}. Similar to the geodesic case, we have the relations $\pi_*(\varPsi^{(\g, \sigma)}_t(\xi)) = J(t)^\perp$ and $K^\sigma(\varPsi^{(\g, \sigma)}_t(\xi)) = \tD J(t)$ (cf.\ \cite[Lemma 5.3]{Gouda_1997} and \cite[Lemma 1.40]{Paternain99_book}). Thus, \hbox{$\| \varPsi^{(\g, \sigma)}_t(\xi)\|^2 = \|J(t)^\perp\|^2 + \|\tD J(t)\|^2$} by \eqref{eqn:twisted_sasaki}. Letting \hbox{$w = J(0)$} and assuming that $\xi \in  E_v^{\sigma, \pm}$, we can use the $\tD$-parallel transport to rewrite the norm in terms of $Y_v^\pm(t)$ as follows:
\begin{equation} \label{eqn:twisted_sasaki_2}
	\| \varPsi^{(\g, \sigma)}_t(\xi)\|^2 = \|Y_v^\pm(t)w\|^2 + \|\dot{Y}_v^\pm(t)w\|^2.
\end{equation}

\begin{proof}[Proof of Proposition \ref{lem:anosov}]
	The first step is to show that the bundles $E^{\sigma, +}$ and $E^{\sigma, -}$ constructed in Lemma \ref{lem:lim_Sv_exist} give us the $\varPsi^{(\g, \sigma)}_t$-invariant splitting needed to apply Lemma \ref{lem:wojtkowski}. Since $\dim(E^{\sigma, +}_v) = \dim(E^{\sigma, -}_v) = n-1$ for all $v \in SM$, it suffices to show that $E^{\sigma,+}_v \cap E^{\sigma,-}_v = 0$ for all $v \in SM$. Using the correspondence described in Lemma \ref{lem:quotient_correspondence}, let $J \in E^{\sigma, +}_v \cap E^{\sigma, -}_v$ and let $J(0) = w \in v^\perp$. We can simultaneously write $J(t) = Y_v^+(t)w$ and $J(t) = Y_v^-(t)w$, and \eqref{eqn:exp} forces $J = 0$. 
	
	The next step is to show that we have the necessary exponential bounds. We start by examining $E^{\sigma, +}$. Let $v \in SM$, $\xi \in E^{\sigma, +}_v$, and $t \geq 0$. Combining \eqref{eqn:exp} and \eqref{eqn:twisted_sasaki_2}, we have $	\|\varPsi^{(\g, \sigma)}_t(\xi)\|^2 \leq {\rm e}^{-2at}\|w\|^2 + \|\dot{Y}_v^+(t)w\|^2.$
	Using \eqref{eqn:ric_comp} and \eqref{eqn:exp}, we see that $\|\dot{Y}_v^+(t) w\| = \|U_v^+(t) \circ Y_v^+(t)w \| \leq \tilde{a} \|Y_v^+(t)\| \leq \tilde{a} {\rm e}^{-at}\|w\|$, and thus
	\begin{equation} \label{eqn:anosov1}
		\|\varPsi^{(\g, \sigma)}_t(\xi)\| \leq \sqrt{1+\tilde{a}^2} \ {\rm e}^{-at}\|\xi\|.
	\end{equation}
	
	We now examine $E^{\sigma, -}$. Let $v \in SM$, $\xi \in E^{\sigma, -}_v$, and $t \geq 0$. Again, combining  \eqref{eqn:exp} and \eqref{eqn:twisted_sasaki_2} yields \hbox{$\|\varPsi^{(\g, \sigma)}_{t}(\xi)\|^2 \geq {\rm e}^{2at}\|w\|^2 + \|\dot{Y}_v^-(t)w\|^2$}. Using \eqref{eqn:exp} and \eqref{eqn:ric_comp3}, we see that $\|\dot{Y}_v^-(t)w\| = \|U_v^-(t) \circ Y_v^-(t)w\| \geq a \|Y_v^-(t)w\| \geq a {\rm e}^{at}\|w\|$, and thus we have the lower bound 
	\begin{equation} \label{eqn:anosov15}
		\|\varPsi^{(\g, \sigma)}_t(\xi)\| \geq \sqrt{1+a^2} \ {\rm e}^{at}\|w\|.
	\end{equation} 
	To finish, observe that $\|\xi\|^2 = \|w\|^2 + \|\dot{Y}_v^-(0)w\|^2$ by \eqref{eqn:twisted_sasaki_2}. Notice that we can use \eqref{eqn:ric_comp} to get \hbox{$\|\dot{Y}_v^-(0)w\| = \|U_v^-(0)w\| \leq \tilde{a} \|w\|$}, and from this we can deduce the bound $(1 + \tilde{a}^2)^{-1/2}\|\xi\| \leq \|w\|$. Combining this with \eqref{eqn:anosov15} and using that the inequality holds for all $v \in SM$, $\xi \in E^{\sigma, -}_v$ and $t \geq 0$, we have
	\begin{equation} \label{eqn:anosov2}
		\|\varPsi^{(\g, \sigma)}_{-t}(\xi)\| \leq \sqrt{\frac{1 + \tilde{a}^2}{1+a^2}} \ {\rm e}^{-at}\|\xi\|.
	\end{equation}
	Setting $b = \max\{ (1+\tilde{a}^2)^{1/2}, (1 + \tilde{a}^2)^{1/2} (1+a^2)^{-1/2}\}$, we see that Lemma \ref{lem:wojtkowski}, \eqref{eqn:anosov1}, and \eqref{eqn:anosov2} prove the result.
\end{proof}

\section{The Ma\~{n}\'{e} critical value of K\"{a}hler magnetic systems} \label{appendixA}

The goal of this appendix is to mimic the proof of \cite[Lemma 6.1]{CieliebakFrauenfelderPaternain_2010} in order to establish \eqref{eqn:mane2}. Namely:

\begin{prop}\label{lem:appendix}
	Let $(M,\g)$ be a compact K\"{a}hler manifold with K\"{a}hler form $\omega$, whose holomorphic sectional curvature is constant and equal to $k<0$. Then for every constant $\lambda\in \R$, the Ma\~{n}\'{e} critical value of the K\"{a}hler magnetic system $(\g,\lambda\omega)$ is given by $s_0(\g,\lambda\omega) = |\lambda|/\sqrt{-k}$.
\end{prop}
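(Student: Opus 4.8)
The plan is to mimic the proof of \cite[Lemma 6.1]{CieliebakFrauenfelderPaternain_2010}, which handles the case $n=1$, and to run it in arbitrary complex dimension. Since $(M,\g)$ is a complete K\"{a}hler manifold with constant holomorphic sectional curvature $k<0$, its Riemann tensor is given by \eqref{eqn:Riemann_Kahler_H}, and hence its universal cover $(\tilde{M},\tg)$, equipped with the pulled-back complex structure, is isometric \emph{as a K\"{a}hler manifold} to complex hyperbolic space $\mathbb{CH}^n$ with holomorphic sectional curvature $k$ (when $n=1$, to the hyperbolic plane of curvature $k$). In particular $\tilde{M}$ is a Cartan--Hadamard manifold, so $H^2(\tilde{M};\R)=0$ and $p^{\ast}\omega=\tilde{\omega}$ is exact; and since the primitives of $p^{\ast}(\lambda\omega)=\lambda\tilde{\omega}$ are exactly $\lambda$ times those of $\tilde{\omega}$, we have $s_0(\g,\lambda\omega)=|\lambda|\cdot\inf\{\sup_{x\in\tilde{M}}\|\eta_x\| : {\rm d}\eta=\tilde{\omega}\}$. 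It therefore suffices to prove that this last infimum equals $1/\sqrt{-k}$; the case $\lambda=0$ is trivial, both sides being $0$.

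For the upper bound, fix $o\in\tilde{M}$ and let $\eta$ be the Poincar\'{e}-lemma primitive of $\tilde{\omega}$ obtained by coning off along the geodesics issuing from $o$. As $\tilde{\omega}$ is invariant under the isotropy group of $o$, so is $\eta$, and a symmetry argument shows that at a point $x$ with $d(o,x)=r$ the covector $\eta_x$ is a scalar function of $r$ times the metric dual of $J\partial_r$. Imposing ${\rm d}\eta=\tilde{\omega}$, which amounts to reading off the explicit Jacobi fields of $\mathbb{CH}^n$ governed by \eqref{eqn:Riemann_Kahler_H}, pins down this function, and a direct computation gives $\|\eta\|_\infty=1/\sqrt{-k}$. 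In the case $n=1$ one recovers, in geodesic polar coordinates, the primitive $\eta=\tfrac{1}{-k}\bigl(\cosh(\sqrt{-k}\,r)-1\bigr){\rm d}\vartheta$ of the area form, whence $\|\eta\|_\infty=\sup_{r>0}\tanh(\sqrt{-k}\,r/2)/\sqrt{-k}=1/\sqrt{-k}$.

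For the lower bound, let $\theta$ be any primitive of $\lambda\tilde{\omega}$. As $\tilde{\omega}$ is parallel, $({\rm d}\theta)^n=\lambda^n\tilde{\omega}^{\,n}=\lambda^n n!\,{\rm vol}$ and $({\rm d}\theta)^n={\rm d}\bigl(\theta\wedge({\rm d}\theta)^{n-1}\bigr)$. Applying Stokes' theorem on the geodesic ball $B_R\subset\tilde{M}$ of radius $R$ centered at $o$, and estimating the integrand on $\partial B_R$ pointwise, one obtains
\[
|\lambda|^{n}\,n!\,{\rm vol}(B_R)\;\le\;C_n\,|\lambda|^{n-1}\,\|\theta\|_\infty\,{\rm vol}(\partial B_R),
\]
where $C_n$ is a dimensional constant; because $\tilde{\omega}$ is a calibration (Wirtinger's inequality), one may take $C_n$ to be the comass $(n-1)!$ of $\tilde{\omega}^{\,n-1}$. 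Dividing by $|\lambda|^{n-1}$ and letting $R\to\infty$, the explicit (exponential) volume growth of $\mathbb{CH}^n$, for which ${\rm vol}(B_R)/{\rm vol}(\partial B_R)\to 1/(n\sqrt{-k})$, gives $\|\theta\|_\infty\ge|\lambda|\,\tfrac{n!}{(n-1)!}\,\tfrac{1}{n\sqrt{-k}}=|\lambda|/\sqrt{-k}$, matching the upper bound.

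The main difficulty here is not conceptual but a matter of bookkeeping the constants: one must normalize the complex hyperbolic metric so that \eqref{eqn:Riemann_Kahler_H} holds with ${\rm sec}^{\rm hol}\equiv k$, compute the comass of $\tilde{\omega}^{\,n-1}$ and the limiting volume ratio of $\mathbb{CH}^n$, and check that the coning primitive of the upper bound truly attains (rather than merely dominates) the value $1/\sqrt{-k}$, so that the two estimates meet exactly. Each ingredient is classical; the content is verifying that the one-dimensional computation of \cite{CieliebakFrauenfelderPaternain_2010} goes through in every complex dimension.
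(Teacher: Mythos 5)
Your proof is correct and takes a genuinely different route from the paper on the lower bound, which is the non-trivial direction. The paper identifies the universal cover with the explicit ball model, writes down a closed-form primitive $\theta$ satisfying $\|\theta_z\|=\|z\|$ for the upper bound, and then for the lower bound restricts to a totally geodesic $\mathbb{B}^1\subset\mathbb{B}^n$ and uses the alternative action-functional characterization \eqref{eqn:mane-alternative}: the action of large hyperbolic circles becomes negative whenever $s<1$, pushing the one-dimensional isoperimetric comparison through the embedding. Your argument instead stays intrinsic in $\mathbb{CH}^n$ and works directly from \eqref{eqn:mane}: you apply Stokes' theorem to $(\mathrm{d}\theta)^n=\mathrm{d}(\theta\wedge(\mathrm{d}\theta)^{n-1})$ on geodesic balls, estimate the boundary term by $\|\theta\|_\infty$ times the comass $(n-1)!$ of $\tilde\omega^{\,n-1}$ (Wirtinger), and then invoke the exponential volume growth $\mathrm{vol}(B_R)/\mathrm{vol}(\partial B_R)\to 1/(n\sqrt{-k})$. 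The constants match exactly, as they must: note that for $n=1$ your inequality reduces to precisely the isoperimetric comparison the paper performs inside the embedded disk. What your route buys is independence from the ball model and from the alternative action characterization; what it costs is the need for Wirtinger's inequality and the comass-of-wedge estimate $\mathrm{comass}(\theta\wedge\beta)\le\|\theta\|\,\mathrm{comass}(\beta)$, both of which you invoke correctly.

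One place where you should be more careful is the upper bound. You assert that the $U(n)$-invariant coning primitive $\eta$ has $\|\eta\|_\infty = 1/\sqrt{-k}$, but for $n>1$ this is not a one-line symmetry statement: the radial Jacobi fields of $\mathbb{CH}^n$ split into a direction of curvature $k$ (along $J\partial_r$) and $2n-2$ directions of curvature $k/4$, and the scalar function multiplying the dual of $J\partial_r$ must be computed from this anisotropic data. The computation does give $1/\sqrt{-k}$ (and in the ball model the paper's primitive $\theta$ with $\|\theta_z\|=\|z\|$ is exactly the $U(n)$-invariant choice), but as written this is a flagged gap rather than a proof. The paper's explicit-model computation sidesteps the Jacobi-field bookkeeping entirely.
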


Here, we will use an alternative description of the Ma\~{n}\'{e} critical value (see \cite[Section 5.1]{CieliebakFrauenfelderPaternain_2010}). Let $(\g,\sigma)$ be any magnetic system on $M$, $s>0$, and $p\colon\widetilde{M}\to M$ be the universal covering of $M$. If $p^*\sigma$ is exact, we consider the Lagrangian $L_s\colon T\widetilde{M}\to \R$ given by
\begin{equation}
	L_s(v) = \frac{1}{2} (\|v\|^2+s^2) - \theta(v),
\end{equation}
where $\|\cdot\|$ is the norm induced by $p^*\g$ and $\theta$ is any primitive of $p^*\sigma$. Writing $\Lambda\widetilde{M}$ for the space of absolutely continuous loops in $\widetilde{M}$, we also consider the action functional $A_s\colon \Lambda\widetilde{M}\to \R$ given by
\begin{equation}\label{eqn:action}
	A_s(\gamma) = \int_0^T L_s(\dot{\gamma}(t))\,{\rm d}t,
\end{equation}where $T>0$ is the smallest period of $\gamma$. Evidently, \eqref{eqn:action} is independent of the choice of primitive $\theta$. With this in place, we have that
\begin{equation}\label{eqn:mane-alternative}
	s_0(\g,\sigma) = \begin{cases} {\displaystyle \inf\{s>0 \mid A_s(\gamma)\geq 0\mbox{ for all }\gamma\in\Lambda\widetilde{M} \}}  & \text{if } p^* \sigma \text{ is exact}, \\ +\infty &\text{otherwise}. \end{cases}
\end{equation}

\begin{proof}[Proof of Proposition \ref{lem:appendix}]
	As it follows from \eqref{eqn:mane} that
	\begin{equation}
		s_0(-k\g,\lambda\omega) = \frac{|\lambda|}{\sqrt{-k}}s_0(\g,\omega),
	\end{equation}it suffices to prove that $s_0(\g,\omega)=1$ under the assumption that $k=-1$. By \cite[Theorems 7.8 and 7.9]{KN2}, the universal covering of $M$ is holomorphically isometric to the unit ball $\mathbb{B}^n = \{z\in \mathbb{C}^n \mid \|z\|<1\}$ equipped with the K\"{a}hler metric
	\begin{equation}\label{eqn:g-ball}
		\widetilde{\g} = -4\sum_{j,k=1}^n \left(\frac{(1-\|z\|^2)\delta_{jk} + \bar{z}_jz_k}{(1-\|z\|^2)^2}\right)\,{\rm d}z_j\,{\rm d}\bar{z}_k.
	\end{equation}
	The K\"{a}hler form $\widetilde{\omega}$ of $(\mathbb{B}^n, \widetilde{\g})$ is given by
	\begin{equation}\label{eqn:omega-ball}
		\widetilde{\omega} = -4{\rm i} \sum_{j,k=1}^n\left(\frac{(1-\|z\|^2)\delta_{jk} + \bar{z}_jz_k}{(1-\|z\|^2)^2}\right)\,{\rm d}z_j\wedge{\rm d}\bar{z}_k,
	\end{equation}
	and admits
	\begin{equation}\label{eqn:theta-ball}
		\theta = \frac{-2{\rm i}}{1-\|z\|^2}\sum_{j=1}^n (\bar{z}_j\,{\rm d}z_j - z_j\,{\rm d}\bar{z}_j)
	\end{equation}as a primitive. A routine computation using \eqref{eqn:g-ball}, \eqref{eqn:theta-ball}, and \eqref{eqn:mane}, shows that $\|\theta_z\| = \|z\| < 1$ for all $z\in \mathbb{B}^n$, so that $s_0(\g,\omega) \leq \sup_{z\in \mathbb{B}^n} \|\theta_z\| = 1$. It remains to establish the reverse inequality. To do so, we consider the totally geodesic embedding $\iota\colon \mathbb{B}^1 \to \mathbb{B}^n$ given by $\iota(\zeta) = (\zeta,0,\ldots, 0)$. From \cite[Theorem 3.19]{Goldman_1999}, we have
	\begin{equation}
		\iota^*\widetilde{\g} = -\frac{4\,{\rm d}\zeta\,{\rm d}\bar{\zeta}}{(1-|\zeta|^2)^2}\quad\mbox{and}\quad \iota^*\widetilde{\omega} = -\frac{4{\rm i}\,{\rm d}\zeta\wedge{\rm d}\bar{\zeta}}{(1-|\zeta|^2)^2}.
	\end{equation}Let $s>0$. For each $r>0$, let $\gamma_r\colon [0,\sinh r]\to \mathbb{B}^1$ be the circle of radius $r$ with respect to $\iota^*\widetilde{\g}$, parametrized with constant speed $s$. Denoting by $\ell(\gamma_r)=2\pi\sinh r$ and ${\rm a}(D_r) = 2\pi(\cosh r-1)$ the $\iota^*\widetilde{\g}$-length of $\gamma_r$ and the $\iota^*\widetilde{\g}$-area of the disk bounded by $\gamma_r$, respectively, we proceed to compute the action \eqref{eqn:action} with the aid of $\theta$ in \eqref{eqn:theta-ball} and Stokes' theorem:
	\begin{equation}\label{eqn:action_ball}
		\begin{split}
			A_s(\gamma_r)  &= \int_0^{\ell(\gamma_r)/s} \frac{1}{2} \left((\iota^*\widetilde{\g})(\dot{\gamma}_r(t),\dot{\gamma}_r(t)) + s^2\right) - (\iota^*\theta)(\dot{\gamma}_r(t)) \,{\rm d}t \\ &= s\ell(\gamma_r) - {\rm a}(D_r) \\ &= 2\pi(s\sinh r - \cosh r  +1).
		\end{split}
	\end{equation}
	It follows from \eqref{eqn:action_ball} that if $s<1$, the value $A_s(\gamma_r)$ becomes negative for sufficiently large $r$. We conclude from \eqref{eqn:mane-alternative} that $s_0(\g,\omega) \geq 1$, as desired.
\end{proof}

\section*{Conflict of interest}

The authors have no relevant financial or non-financial interests to disclose.

\bibliography{magnetic_refs}{}	
\bibliographystyle{plain}

\end{document}